\newtheorem{Theorem}{Theorem}
\newtheorem{Corollary}[Theorem]{Corollary}
\newtheorem{Proposition}[Theorem]{Proposition}
\newtheorem{Remark}[Theorem]{Remark}
\newtheorem{Example}[Theorem]{Example}
\newtheorem{Assertion}[Theorem]{Assertion}
\newtheorem{Conjecture}[Theorem]{Simply connectedness Conjecture}
\newcommand{\eps}{\varepsilon}
\newcommand\la{\lambda}
\newcommand\vphi{\varphi}
\newcommand\si{\sigma}
\newcommand\ga{\gamma}
\newcommand\de{\delta}
\newcommand\BC{ {\mathbb C}}
\newcommand\BZ{{\mathbb  Z}}
\newcommand\BR{ {\mathbb  R}}
\newcommand\BP{ {\mathbb  P}}
\newcommand\bfu{\mbox {\bf  u}}
\newcommand\bfz{\mbox {\bf  z}}
\newcommand\nl{\newline}
\newcommand\discrim{\rm{ discrim}\/}
\newcommand\id{\rm{id}}
\newcommand\inv{^{-1}}
\def\mapright#1{\smash{\mathop{\longrightarrow}\limits^{{#1}}}}
\def\inv{^{-1}}
\begin{document}

\title[ Smooth mixed projective curves and a  conjecture 
]
{
Smooth mixed projective curves and a  conjecture }

\author
[M. Oka ]
{Mutsuo Oka \\\\
{\tiny Dedicated to Professor Egbert Brieskorn}}
\address{\vtop{
\hbox{Department of Mathematics}
\hbox{Tokyo  University of Science}
\hbox{Kagurazaka 1-3, Shinjuku-ku}
\hbox{Tokyo 162-8601}
\hbox{\rm{E-mail}: {\rm oka@rs.kagu.tus.ac.jp}}
}}
\keywords {Mixed  homogeneous, Milnor fiber}
\subjclass[2000]{14J17, 14N99}

\begin{abstract}
Let $f(\bfz,\bar\bfz)$ be a   strongly  mixed homogeneous polynomial
of $3$ variables $\bfz=(z_1,z_2,z_3)$ of polar degree $q$ with an isolated singularity at the origin. It
 defines a smooth Riemann surface 
$C$
in the complex projective space
$\BP^{2}$.
The fundamental group of the complement $\pi_1(\mathbb P^2\setminus C)$ is a cyclic  group of order $q$ if
$f$ is a homogeneous polynomial without $\bar \bfz$. We propose  a conjecture that this may be even true for mixed homogeneous polynomials
by giving several supporting examples.
\end{abstract}
\maketitle

\maketitle

\section{Introduction}
Let $f(\bfz,\bar\bfz)=\sum_{\nu,\mu}c_{\nu,\mu}\bfz^{\nu}{\bar\bfz}^\mu$ be a mixed polynomial of $n$-variables  $\bfz=(z_1,\dots, z_n)\in \BC^n$.  $f(\bfz,\bar \bfz)$ is called a
{\em  strongly mixed   homogeneous polynomial }
of $n$-variables  $\bfz=(z_1,\dots, z_n)\in \BC^n$ 
with polar degree $q$ and radial
degree
$d$ if $|\nu|+|\mu|=d$ and $|\nu|-|\mu|=q$ for  any $\nu,\mu$ with  $c_{\nu,\mu}\ne 0$.
For such a polynomial, we consider the canonical $\mathbb C^*$-action on $\mathbb C^n$.
Recall that a strongly mixed  homogeneous polynomial
$f(\bfz,\bar\bfz)$
satisfies the equality (\cite{MC}):
\begin{eqnarray*}\label{action}
 f(\lambda\circ\bfz,\overline{\lambda\circ\bfz})=r^{d}\exp{(iq\theta)}f(\bfz,\bar\bfz),\quad\text{where}\,\,
 \lambda= r\exp(i\theta)\in \BR^+\times S^1.
\end{eqnarray*}
Here 
$\lambda\circ\bfz\,=\,(\la z_1,\dots,\la z_n)$. By the above equality,
it defines canonically a real analytic subvariety $V$ of real codimension 2 in 
$\mathbb P^{n-1}$:
\[
V=\{[\bfz]\in \mathbb P^{n-1}\,|\, f(\bfz,\bar\bfz)=0\}.
\]
Let $\widetilde V$ be the mixed affine hypersurface
\[
 \widetilde V=f\inv(0)=\{\bfz\in \BC^n\,|\, f(\bfz,\bar\bfz)=0\}.
\]
We assume that $\widetilde V$ has an isolated singularity at the origin, or equivalently $V$ is a non-singular variety. 
Let $f:\BC^n\setminus \widetilde V\to \BC^*$  be the global 
Milnor fibration defined by $f$
and let $F$ be the Milnor fiber.
Namely $F=f\inv(1)\subset \BC^n$.
The monodromy map $h:F\to F$  is defined by
\[
 h(\bfz)=(\omega_q z_1,\dots,\omega_q z_n),
\quad \omega_q=\exp(\frac{2\pi\,i}{q}). 
\]
and its restriction  of the Hopf fibration to the Milnor fiber
$\pi:\,F\to \BP^{n-1}\setminus V$ is nothing but the quotient map by the cyclic action induced by $h$.

In \cite{OkaJarc,MC}, we have shown  that 
\begin{Theorem}[Theorem 11, \cite{MC}]
The embedding degree of $V$ is equal to the polar degree $q$.
In particular, $H_1(\mathbb P^{n-1}\setminus V)=\mathbb Z/q\mathbb Z$ if $V$ is non-singular in an open dense subset.
\end{Theorem}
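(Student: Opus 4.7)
The plan is to realise $\mathbb{P}^{n-1}\setminus V$ as the base of a $q$-fold cyclic covering with total space the Milnor fiber $F$, and then to pin down the meridian class in $H_1$. The mixed homogeneity identity rewrites as $f(\lambda\circ\bfz,\overline{\lambda\circ\bfz})=|\lambda|^{d}(\lambda/|\lambda|)^{q}f(\bfz,\bar\bfz)$, so for fixed $\bfz\notin\widetilde V$ the equation $f(\lambda\circ\bfz,\overline{\lambda\circ\bfz})=1$ has exactly $q$ solutions in $\lambda\in\mathbb{C}^*$ (one radius $r>0$ and $q$ angles modulo $2\pi$). Hence the Hopf projection $\pi:\mathbb{C}^{n}\setminus\{0\}\to\mathbb{P}^{n-1}$ restricts to a $q$-fold unramified covering $\pi|_{F}:F\to\mathbb{P}^{n-1}\setminus V$ with deck group $\langle h\rangle\cong\mathbb{Z}/q\mathbb{Z}$. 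Assuming $F$ is connected, this regular covering yields a surjection $\chi:\pi_{1}(\mathbb{P}^{n-1}\setminus V)\twoheadrightarrow\mathbb{Z}/q\mathbb{Z}$, whence $H_{1}(\mathbb{P}^{n-1}\setminus V)\twoheadrightarrow\mathbb{Z}/q\mathbb{Z}$.

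Next, let $\mu$ be a small meridian of $V$ at a smooth point $[\bfz_{0}]$. The plan is to show $[\mu]$ has order exactly $q$ in $H_{1}$, which is the embedding-degree assertion. For divisibility by $q$, I would lift $\mu$ to a loop $\tilde\mu$ in $\mathbb{C}^{n}\setminus\widetilde V$ and observe, by a local computation at the smooth point of $\widetilde V$, that $f(\tilde\mu)$ has winding number $1$ around $0\in\mathbb{C}^*$. Tracing through the explicit construction of the cover (rescaling $\tilde\mu$ by $\lambda(t)\in\mathbb{C}^*$ so that $f\circ\tilde\mu\equiv 1$ forces $\lambda(1)/\lambda(0)=\omega_{q}^{-1}$) then gives $\chi([\mu])\equiv -1\pmod{q}$, a generator. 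For the reverse inequality, take a generic complex line $L\subset\mathbb{P}^{n-1}$; then $f|_{\tilde L}$ is a strongly mixed homogeneous polynomial of two variables of polar degree $q$, whose projective zero locus $L\cap V$ consists of exactly $q$ smooth points $p_{1},\dots,p_{q}$. The $2$-chain $L\setminus\{p_{1},\dots,p_{q}\}$ in $\mathbb{P}^{n-1}\setminus V$ has boundary equal to the sum of the local meridians at the $p_{j}$, and these are all homologous to $\mu$ by paths in $L\setminus V$; hence $q\cdot[\mu]=0$. Combining the two bounds, $[\mu]$ has order exactly $q$, establishing the first sentence of the theorem.

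For the final $H_{1}$ statement, assume $V$ is non-singular on an open dense subset. The singular locus $\Sigma\subset V$ has real codimension at least $2$ in $V$, hence real codimension at least $4$ in $\mathbb{P}^{n-1}$, so $H_{1}(\mathbb{P}^{n-1}\setminus V)\cong H_{1}(\mathbb{P}^{n-1}\setminus(V\setminus\Sigma))$. On the smooth part $V\setminus\Sigma$, a transversality/Zariski--Lefschetz argument---using that $\mathbb{P}^{n-1}$ is simply connected and that $V\setminus\Sigma$ is a smooth real codimension-$2$ submanifold---shows that every loop in the complement can be broken into meridian loops, so $[\mu]$ generates $H_{1}$. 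Combined with the order computation above, $H_{1}(\mathbb{P}^{n-1}\setminus V)=\mathbb{Z}/q\mathbb{Z}$. The main obstacle I foresee is the generic-line count in the second step: verifying that a generic $L$ meets $V$ in exactly $q$ smooth transverse points (the polar-degree analogue of B\'ezout for mixed polynomials) requires a careful degree-of-map computation for $f/|f|$ on $\tilde L\cap S^{3}\to S^{1}$, exploiting the $\mathbb{R}^{+}\times S^{1}$ decomposition of the $\mathbb{C}^*$-action and the isolated-singularity hypothesis.
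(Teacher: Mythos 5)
This theorem is not proved in the paper at all: it is quoted verbatim from the author's earlier work \cite{MC} (``Theorem 11''), so there is no in-text argument to compare against. Judged on its own, your strategy is the standard one and matches the spirit of the original: the Hopf map restricted to the Milnor fiber is the $q$-fold cyclic cover classified by a homomorphism $\chi:\pi_1(\mathbb P^{n-1}\setminus V)\to\mathbb Z/q\mathbb Z$ sending a meridian to a generator (lower bound on the order of $[\mu]$), a generic line gives $q\cdot[\mu]=0$ (upper bound), and a transversality argument shows $[\mu]$ generates $H_1$. One remark in your favor: you do not actually need to \emph{assume} $F$ connected to get the surjection $H_1\twoheadrightarrow\mathbb Z/q\mathbb Z$; once $\chi([\mu])$ is a generator, $\chi$ is surjective (and connectedness of $F$ follows as a byproduct).

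The genuine gap is the step you yourself flag: the claim that a generic line $L$ meets $V$ in \emph{exactly} $q$ smooth points is false for mixed curves. The correct statement (Oka's intersection theory on mixed curves, \cite{MixIntersection}) is that the intersection number \emph{counted with local signs} equals $q$; the set-theoretic count can be $q+2k$ with $k$ points of negative local degree. The present paper's own examples make this vivid: in Example 1-1 the polar degree is $1$, yet over the region $A$ the pencil line meets $C$ in three points ($\gamma(A)=3$), and in Example 2 one has $q=2$ but $\gamma(A)=6$. Your boundary-of-a-$2$-chain argument survives this correction, because the boundary of $L$ minus small disks is the signed sum of local meridians, each local meridian being $[\mu]^{\pm1}$ according to the local orientation, so one still gets $(q_+-q_-)[\mu]=q[\mu]=0$; but as written (``the boundary equals the sum of the local meridians'') the argument silently assumes all local intersections are positive. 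You should replace the B\'ezout-type count by the signed count and justify that the total signed degree of $f/|f|$ on a generic $\widetilde L\cap S^{2n-1}\to S^1$ equals the polar degree $q$ --- which is exactly the content of the embedding-degree computation in \cite{MC} and is where the real work lies. The same caveat applies to your local winding-number computation at a smooth point of $V$: it gives $\chi([\mu])=\pm1$, which is still a generator, so that part is harmless.
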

\begin{Proposition}
The Euler characteristics satisfy the following equalities.
 \begin{enumerate}\label{formula1}
\item
$\chi(F)=q\,\chi(\BP^{n-1}\setminus V)$ and 
    $\chi(\BP^{n-1}\setminus V)=n-\chi(V)$.
     In particular, if $n=3$ and $V$ is smooth curve with the genus  $g$,
then $\chi(F)=q(1+2g)$.

\item The following sequence is exact.
$$1\to \pi_1(F)\mapright{\pi_{\sharp}} \pi_1(\BP^{n-1}\setminus V)\to
     \BZ/q\BZ\to 1.$$
      \item
If  $q=1$, the projection $\pi:F\to\BP^{n-1}\setminus V$ is a
 diffeomorphism.
\end{enumerate}\end{Proposition}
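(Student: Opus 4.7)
The plan is to recognize that $\pi: F \to \mathbb P^{n-1}\setminus V$ is a $q$-fold regular covering with deck transformation group $\mathbb Z/q\mathbb Z$ generated by the monodromy $h$, after which all three statements follow from standard covering-space theory.

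First I would check the covering structure explicitly. Given $[\bfz]\in \mathbb P^{n-1}\setminus V$, any preimage under $\pi$ has the form $\lambda\circ\bfz$ with $\lambda = re^{i\theta}\in \mathbb C^*$ satisfying $f(\lambda\circ\bfz,\overline{\lambda\circ\bfz}) = 1$. The quasi-homogeneity identity displayed in the introduction reduces this to $r^d e^{iq\theta} f(\bfz,\bar\bfz) = 1$; writing $f(\bfz,\bar\bfz) = \rho e^{i\phi}$ with $\rho>0$ (possible because $[\bfz]\notin V$), the modulus equation determines $r = \rho^{-1/d}$ uniquely, while the phase equation $q\theta \equiv -\phi \pmod{2\pi}$ has exactly $q$ solutions, which are cyclically permuted by $h$. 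Since $0\notin F$, the $\mathbb Z/q\mathbb Z$-action generated by $h$ is free on $F$, so $\pi$ is a $q$-fold regular covering with deck group $\mathbb Z/q\mathbb Z$.

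Parts (1) and (3) then follow at once: multiplicativity of Euler characteristic under finite covers gives $\chi(F) = q\,\chi(\mathbb P^{n-1}\setminus V)$, and additivity together with $\chi(\mathbb P^{n-1}) = n$ gives $\chi(\mathbb P^{n-1}\setminus V) = n - \chi(V)$; specializing to $n=3$ and a smooth genus-$g$ curve $V$ yields $\chi(V) = 2-2g$ and hence $\chi(F) = q(1+2g)$. When $q=1$ the covering is one-sheeted, so $\pi$ is a smooth bijection between manifolds of the same dimension and hence a diffeomorphism. For part (2), the long exact sequence in homotopy for the fibration $\mathbb Z/q\mathbb Z \hookrightarrow F \to \mathbb P^{n-1}\setminus V$ with discrete fiber collapses, since higher $\pi_i$ of the fiber vanish and $\pi_0$ of the fiber is $\mathbb Z/q\mathbb Z$, to the asserted short exact sequence $1 \to \pi_1(F) \to \pi_1(\mathbb P^{n-1}\setminus V) \to \mathbb Z/q\mathbb Z \to 1$. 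The only non-mechanical point is the path-connectedness of $F$ (needed so that the $\pi_0$ terms in the long exact sequence are trivial), which is the standard connectedness of the Milnor fiber of an isolated singularity in $n\geq 2$ variables; beyond that there is no real obstacle.
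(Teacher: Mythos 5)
Your overall route is exactly the one the paper intends: the introduction already describes $\pi:F\to\BP^{n-1}\setminus V$ as the quotient map by the cyclic action generated by $h$ (the Proposition itself is quoted from \cite{OkaJarc,MC} without a written proof here), and your explicit check that the fiber over $[\bfz]$ consists of the $q$ points $\lambda\circ\bfz$ with $r=\rho^{-1/d}$ and $q\theta\equiv-\phi \pmod{2\pi}$, freely permuted by $h$, is correct. Parts (1) and (3), and the covering-space exact sequence for (2), then follow as you say.

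The one step that does not go through as written is your justification of the connectedness of $F$, which part (2) genuinely needs (for a disconnected regular covering the quotient $\pi_1(\BP^{n-1}\setminus V)/\pi_{\sharp}\pi_1(F)$ would only be the stabilizer of a component, not all of $\BZ/q\BZ$). You appeal to ``the standard connectedness of the Milnor fiber of an isolated singularity in $n\ge 2$ variables,'' but that is a theorem about holomorphic $f$; for mixed polynomials the paper explicitly stresses that no connectivity theorems are available --- indeed, connectedness of the Milnor fiber is something it proves by hand in the join-type Theorem rather than citing. The repair stays inside your own framework: since $V$ is non-singular and non-empty, take a meridian loop around a smooth point $p$ of $V$. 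Because $f$ is a submersion at $p$, its restriction to a small transverse disk is a degree-$\pm1$ map near $0$, so traversing the meridian shifts the phases $\theta_j=(-\phi+2\pi j)/q$ of the $q$ preimages by $\mp 2\pi/q$; the meridian therefore acts on the fiber of $\pi$ as a $q$-cycle, the monodromy of the covering is transitive, and $F$ is connected. (Equivalently: in the global fibration $f:\BC^n\setminus\widetilde V\to\BC^*$, the image of $\pi_1(\BC^n\setminus\widetilde V)\to\pi_1(\BC^*)=\BZ$ contains $q$, from an $S^1$-orbit, and $\pm1$, from a meridian of $\widetilde V$, hence $\pi_0(F)$ is trivial.) With that substitution your proof is complete.
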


Using the periodic monodromy argument in \cite{Milnor}, we have 
\begin{Proposition}
The zeta function of the monodromy $h:F\to F$ is given by
\[
 \zeta(t)=(1-t^{q})^{-\chi(F)/q}.
\]In particular, if $q=1$,
$h=\id_F$ and 
$\zeta(t)=(1-t)^{-\chi(F)}$.
\end{Proposition}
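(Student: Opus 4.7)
The plan is to exploit the fact that the monodromy $h$ is periodic of period dividing $q$, acting freely, and feed this into the standard Lefschetz-style formula for the zeta function.

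First I would observe that $h(\bfz)=\omega_q\circ\bfz$ satisfies $h^q=\id_F$, because $\omega_q^q=1$. Moreover the cyclic action $\langle h\rangle$ on $F$ is free: a fixed point of $h^k$ would require $\omega_q^k z_j=z_j$ for all $j$, i.e. either $\bfz=0$ (not in $F$, since $f(0,\bar 0)=0\ne 1$) or $\omega_q^k=1$, which happens precisely when $q\mid k$. Hence
\[
\text{Fix}(h^k)=\begin{cases} F,& q\mid k,\\ \emptyset,& q\nmid k.\end{cases}
\]

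Next I would apply the Lefschetz fixed point theorem to the periodic homeomorphism $h^k$ of $F$ (which has the homotopy type of a finite CW complex, by Milnor's fibration theorem for strongly mixed homogeneous polynomials). Because a fixed-point-free self-map of finite period has vanishing Lefschetz number (a classical consequence of the equivariant Euler characteristic formula applied to the free action of the cyclic subgroup generated by $h^k$ on $F$), I obtain
\[
L(h^k)=\begin{cases}\chi(F),& q\mid k,\\ 0,& q\nmid k.\end{cases}
\]

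Finally I would substitute into the logarithmic form of the zeta function
\[
\log \zeta(t)=\sum_{k\ge 1}\frac{L(h^k)}{k}\,t^k,
\]
which collapses to
\[
\log\zeta(t)=\sum_{j\ge 1}\frac{\chi(F)}{jq}\,t^{jq}=-\frac{\chi(F)}{q}\,\log(1-t^q),
\]
yielding $\zeta(t)=(1-t^q)^{-\chi(F)/q}$. The case $q=1$ is immediate: then $h=\id_F$ and the formula specializes to $\zeta(t)=(1-t)^{-\chi(F)}$.

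The only subtle step is justifying that the Lefschetz trace formula applies to $F$ (non-compact) and that freeness of the periodic action forces $L(h^k)=0$; here one invokes the exact argument Milnor uses for classical weighted homogeneous singularities in \cite{Milnor}, which carries over verbatim because the relevant ingredients — the finite CW homotopy type of $F$ and the free cyclic $h$-action — are available in the mixed setting.
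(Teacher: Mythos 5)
Your proof is correct and is essentially the paper's own argument: the paper simply invokes ``the periodic monodromy argument in \cite{Milnor}'', and what you have written out --- $h^q=\id_F$, freeness of the cyclic action on $F\not\ni 0$, vanishing of the Lefschetz numbers $L(h^k)$ for $q\nmid k$, and substitution into $\log\zeta(t)=\sum_{k\ge 1}L(h^k)t^k/k$ --- is exactly that argument, with the finite homotopy type of the (semialgebraic) fiber correctly flagged as the point needing care. No gap.
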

If $f$ is a holomorphic function, $F$ is $(n-2)$-connected and it is homotopic to
 a bouquet of $\mu$ spheres of dimension $n-1$
(\cite{Milnor}). 

For mixed polynomials, we do not have any connectivity theorem.  
But we do not have any examples  which breaks the connectivity as the holomorphic case.
Thus we propose the following conjecture as a first working problem.
\begin{Conjecture}\label{simply-connected-conjecture}
Assume $n=3$ and that $f$ is a   non-degenerate, strongly mixed homogeneous polynomial of polar degree $q$. Then 
Milnor fiber $F$ is simply connected. Equivalently
the fundamental group of the complement $\pi_1(\mathbb P^2 -  V)$ is a cyclic group of order $q$.
\end{Conjecture}
The purpose of this paper is to give several non-trivial examples which support this conjecture.

\section{Easy mixed polynomials } 
Unlike the holomorphic case, we do not know in general the connectivity of the Milnor fiber even under the  assumption that  $V$ has an isolated singularity at the origin. 
In this section, we study easy examples.
Suppose that $f$ is either a simplicial mixed polynomial or a join type or twisted join type  polynomial. Then  the connectivity 
behaves just as the holomorphic case.
 We will first explain these polynomials below.
 \subsection{Simplicial polynomial} Assume that $n=3$ and $\bfz=(z_1,z_2,z_3)$.  A mixed polynomial $f(\bfz,\bar\bfz)$ is called simplicial if  it is a linear  sum of three mixed monomials
 \[
 f(\bfz,\bar\bfz)=\sum_{i=1}^3 c_i \bfz^{\nu_i}{\bar\bfz}^{\mu_i}
 \]
 and 
 two matrices
 \[
 \left(\nu_i\pm \mu_i\right)_{i=1}^3=\left( 
 \begin{matrix}
 \nu_{11}\pm\mu_{11}&\nu_{12}\pm\mu_{12}&\nu_{13}\pm\mu_{13}\\
 \nu_{21}\pm\mu_{21}&\nu_{22}\pm\mu_{22}&\nu_{23}\pm\mu_{23}\\
 \nu_{31}\pm\mu_{31}&\nu_{32}\pm\mu_{32}&\nu_{33}\pm\mu_{33}\\
 \end{matrix}\right)
 \]
are non-degenerate
 where $\nu_i=(\nu_{i1},\nu_{i2},\nu_{i3}),\,\mu_i=(\mu_{i1},\mu_{i2},\mu_{i3})$. In this case, we may assume that $c_i=1$ for $i=1,2,3$. Among them, the following polynomials are strongly mixed homogeneous and  have an isolated singularity at the origin. 
  \begin{eqnarray*}
& f_B&:=z_1^{q+r}{\bar z_1}^{r}+z_2^{q+r}{\bar z_2}^{r}+z_3^{q+r}{\bar z_3}^{r},\,\,\text{(Brieskorn Type)}\\
  &f_I&:=z_1^{q+r-1}{\bar z_1}^{r}z_2+z_2^{q+r-1}{\bar z_2}^{r}z_3+z_3^{q+r}{\bar z_3}^{r},\,\, \text{(Tree type a)}\\
  &f_{II}&:=z_1^{q+r-1}{\bar z_1}^{r}z_2+z_2^{q+r-1}{\bar z_2}^{r}z_3+z_3^{q+r-1}{\bar z_3}^{r}z_1,\,\, \text{(Cyclic type a)}\\
   &f_{III}&:=z_1^{q+r-1}{\bar z_1}^{r}z_2+z_2^{q+r-1}{\bar z_2}^{r}z_1+z_3^{q+r}{\bar z_3}^{r},\,\,\text{(Simplicial+Join a)}\\
   & f_I'&:=z_1^{q+r}{\bar z_1}^{r-1}\bar z_2+z_2^{q+r}{\bar z_2}^{r-1}{\bar z_3}+z_3^{q+r}{\bar z_3}^{r},\,\,
               \text{(Tree type b)}\\
 &f_{II}'&:=z_1^{q+r}{\bar z_1}^{r-1}\bar z_2+z_2^{q+r}{\bar z_2}^{r-1}\bar z_3+z_3^{q+r}{\bar z_3}^{r-1}\bar z_1,\,\,
 \text{(Cyclic type b)}\\
 &f_{III}'&:=z_1^{q+r}{\bar z_1}^{r-1}\bar z_2+z_2^{q+r}{\bar z_2}^{r-1}\bar z_1+z_3^{q+r}{\bar z_3}^{r},\,\,\text{(Simplicial+Join b)}.
 \end{eqnarray*}
 Here $q\ge 2$ and $r\ge 1$ are positive integers.
 All above polynomials have simply connected Milnor fibers (\cite{OkaPolar}).
 For $f_B$, $f_I$, $f_{II}$, $f_{III}$, their Milnor fiberings and links are in fact isotopic to the holomorphic ones 
 by the contraction $z_i^{r}{\bar z_i}^r\mapsto 1$ (\cite{OkaBrieskorn,Inaba}):
  \begin{eqnarray*}
& f_B&:=z_1^{q}+z_2^{q}+z_3^{q},\,\,\text{(Brieskorn Type)}\\
  &f_I&:=z_1^{q-1}z_2+z_2^{q-1}z_3+z_3^{q},\,\, \text{(Tree type a)}\\
 &f_{II}&:=z_1^{q-1}z_2+z_2^{q-1}z_3+z_3^{q-1}z_1,\,\, \text{(Cyclic type a)}\\
 &f_{III}&:=z_1^{q-1}z_2+z_2^{q-1}z_1+z_3^{q},\,\,\text{(Simplicial+Join a)}.
  \end{eqnarray*}
  \begin{Remark}The above list does not cover all. For example, we can combine $f_I$ and $f_I'$:
  \[{f_I}'':=z_1^{q+r}{\bar z_1}^{r-1}\bar z_2+z_2^{q+r-1}{\bar z_2}^{r}z_3+z_3^{q+r}{\bar z_3}^{r}.
  \]
\end{Remark}
\subsection{Join type mixed polynomials}
Let $f(\bfz,\bar \bfz)$ be a strongly mixed homogeneous  convenient polynomial of $n$-variables $\mathbf z=(z_1,\dots,z_n)$ of polar degree $q$ and radial degree $q+2r$.
 with an isolated singularity at the origin. 
Consider the join polynomial $g:=f(\bfz,\bar \bfz)-w^{q+r}{\bar w}^r$ of $(n+1)$-variables. Put $F_f, F_g$ be the  respective 
Milnor fibers of $f$ and $g$. Consider
 the projective
Mixed hypersurfaces $V_f$ and $V_g$ defined by $f=0$ and  $g=0$ respectively in $\mathbb P^{n-1}$ or $\mathbb P^n$.
\begin{Theorem} 
Assume that 
there is a smooth point in $V_f$
and $n\ge 2$.  Then
 \begin{enumerate} 
 \item  $F_f$ is connected and
 \item 
  $\pi_1(\mathbb P^n\setminus V_g)=\mathbb Z/q\mathbb Z$.
 \end{enumerate}
\end{Theorem}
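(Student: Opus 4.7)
The plan is to deduce (1) from the long exact homotopy sequence of the global Milnor fibration and (2) from (1) together with a mixed Thom--Sebastiani join formula and the exact sequence of Proposition~\ref{formula1}(2).

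For part (1), I would apply the tail of the homotopy long exact sequence of $f:\BC^n\setminus\widetilde V_f\to\BC^*$:
\[
\pi_1(\BC^n\setminus\widetilde V_f)\xrightarrow{f_\sharp}\pi_1(\BC^*)\to\pi_0(F_f)\to\pi_0(\BC^n\setminus\widetilde V_f)=0,
\]
where the last term is zero because $\widetilde V_f$ has real codimension $2$ and $n\ge 2$. The hypothesis supplies a smooth point $[\bfz_0]\in V_f$, which lifts to a smooth point $\bfz_0\ne 0$ of the conical hypersurface $\widetilde V_f$; there $f$ is a real submersion (the only mixed critical point of $f$ is the origin). A small meridian loop around $\widetilde V_f$ at $\bfz_0$ is therefore mapped by $f_\sharp$ to a generator of $\pi_1(\BC^*)\cong\BZ$, forcing $\pi_0(F_f)=0$.

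For part (2), since $f$ and $h(w):=w^{q+r}\bar w^r$ use disjoint variables and each has only the origin as its mixed critical point, $g$ has an isolated mixed singularity at $0\in\BC^{n+1}$. Applying Proposition~\ref{formula1}(2) to $g$ gives
\[
1\to\pi_1(F_g)\to\pi_1(\BP^n\setminus V_g)\to\BZ/q\BZ\to 1,
\]
so it suffices to show $\pi_1(F_g)=1$. A direct computation gives $F_h=\{w:w^{q+r}\bar w^r=1\}$: the conditions $|w|=1$ and $w^q=1$ yield exactly $q$ points. Granting the mixed join formula
\[
F_g\ \simeq\ F_f * F_h\ \simeq\ \bigvee_{i=1}^{q-1}\Sigma F_f,
\]
and using that $F_f$ is path-connected by part (1), each wedge summand $\Sigma F_f$ is simply connected, hence so is $F_g$, and part (2) follows.

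The hard part will be justifying the join decomposition $F_g\simeq F_f*F_h$ in the mixed rather than holomorphic setting. My approach would be to study the proper map $\Phi:F_g\to\BC$, $\Phi(\bfz,w):=h(w)$, whose fiber over a generic $t\in\BC\setminus\{0,-1\}$ is $f^{-1}(1+t)\times h^{-1}(t)$ (a disjoint union of $q$ copies of $F_f$, after identifying level sets of $f$ via the polar $\BC^*$-action), degenerating to $F_f$ over $t=0$ and to $\widetilde V_f\times F_h$ over $t=-1$. A radial deformation retract of $\BC$ onto the segment $[-1,0]$ together with the resulting mapping cylinder structure over the two special values should then reconstruct the join $F_f*F_h$ up to homotopy, adapting the classical Thom--Sebastiani argument to the polar weighted category in the spirit of \cite{OkaPolar}.
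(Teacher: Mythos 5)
Your proposal is correct in substance but follows a genuinely different route from the paper's. For (1) the paper does not touch the Milnor fibration at all: it identifies $F_f$ with the affine part $V_g\cap\{w\ne 0\}$ of $V_g$ and observes that the linear projection $V_g\to\BP^{n-1}$ is a $q$-fold covering branched along $V_f$, whose local monodromy around a smooth point of $V_f$ permutes the $q$ sheets cyclically; connectedness of $\BP^{n-1}\setminus V_f$ then forces $V_g\setminus V_f\cong F_f$ to be connected. Your exact-sequence argument, with $\pi_0(F_f)\cong \BZ/\mathrm{im}(f_\sharp)$ killed by a meridian at the smooth point, is shorter and equally valid; note only that the parenthetical ``the only mixed critical point of $f$ is the origin'' is neither needed nor what the theorem hypothesizes at that stage --- what you actually use, correctly, is that $f$ is a submersion at the chosen smooth point. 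For (2) the paper runs a Zariski--van Kampen pencil argument through the base point $[0:\dots:0:1]$: the local monodromy at a smooth point of the branch locus $V_f$ yields $\xi_1=\dots=\xi_q$ and $\xi_q\cdots\xi_1=e$, hence $\BZ/q\BZ$, with a generic hyperplane-section induction for $n>2$. Your route through $\pi_1(F_g)=1$ and the cyclic covering $F_g\to\BP^n\setminus V_g$ is cleaner, treats all $n\ge 2$ uniformly (sidestepping the hyperplane-section surjectivity the paper must argue separately, since the Zariski--Lefschetz theorem is not available for mixed hypersurfaces), and yields strictly more, namely the homotopy type $F_g\simeq\bigvee^{q-1}\Sigma F_f$. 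The trade-off is that the paper's pencil method is the one that generalizes to the twisted-join and non-join examples later in the paper, where no join decomposition exists.

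The one real soft spot is the decomposition $F_g\simeq F_f*F_h$ itself. Do not try to re-derive it from the map $\Phi(\bfz,w)=h(w)$ as sketched: $\Phi$ is not proper (its fibers contain the non-compact factor $f^{-1}(1+t)$), and making the degenerations over $t=0$ and $t=-1$ rigorous requires the polar $\BR^+\times S^1$-action in an essential way. Fortunately this is precisely the join theorem for polar weighted homogeneous polynomials of Cisneros-Molina \cite{Molina}, already in the paper's bibliography, and it applies here because $f$ and $w^{q+r}\bar w^r$ have the same polar and radial degrees, so $g$ is again strongly mixed homogeneous. Citing it, and observing that the exact sequence $1\to\pi_1(F_g)\to\pi_1(\BP^n\setminus V_g)\to\BZ/q\BZ\to 1$ for $g$ only requires $F_g$ to be connected (which the join with the nonempty $F_h$ provides, so you do not actually need your claim that $g$ has an isolated singularity), closes the argument.
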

\begin{proof} In this theorem, we do not assume that $f$ is strongly non-degenerate.
Note that 
\[\begin{split}
F_f=&\{\bfz\in \mathbb C^n\,|\, f(\bfz,\bar \bfz)-1=0\}\\
V_f&=\{[\mathbf z]\in \mathbb P^{n-1}\,|\, f(\mathbf z,\bar{\mathbf z})=0\}\\
V_g=&\{[\bfz:w]\in \mathbb P^n\,|\, f(\bfz,\bar\bfz)-w^{q+r}{\bar w}^r=0\}.
\end{split}
\]
Consider the affine coordinate $U_w:=\{w\ne 0\}$ in $\mathbb P^n$. In this coordinate space, using affine coordinates $u_j=z_j/w,\,j=1,\dots,n$, we see that
\[
V_g\cap U_w=\{\bfu\in \mathbb C^n\,|\, f(\bfu,\bar\bfu)-1=0\}.
\]
This expression says that $F_f\cong V_g\cap U_w$.  Note that $V_g\cap\{w=0\}\cong V_f$ has a smooth point $p$.
Consider the projection
$\pi: \mathbb P^n\to \mathbb P^{n-1}$ which is defined by $[\bfz:w]\mapsto [\bfz]\in \mathbb P^{n-1}$. 
Then the restriction
$\pi:  V_g\to \mathbb P^{n-1}$ is $q$-fold covering branched over $V_f\subset \mathbb P^{n-1}$.
Take a non-singular point  $p$  of $V_f\subset \mathbb P^{n-1}$
and consider a small normal disk $D$ centered at $p$. For simplicity, we assume that
$p\in\{z_1\ne 0\}$ and we choose affine coordinate chart
$\{z_1\ne 0\}$ with affine coordinates
$v_j=z_j/z_1,\,j=2,\dots, n$ and $x=w/z_1$. In this chart, $V_g$ is defined by
$f(\mathbf v,\bar{\mathbf v})-x^{q+r}{\bar x}^r=0$ with $\mathbf v=(1,v_2,\dots, v_n)$.
Then the covering  $(\mathbf v,x)\mapsto  \mathbf v$ is topologically equivalent to the holomorphic cyclic covering defined by 
$x^q-f=0$ in a small disk $D$ with center $p$. (In $D$ we can take the function $f:D\to\mathbb C$ as a real analytic complex-valued coordinate function and we may assume that the image $f(D)$ is a small unit disk $\Delta_\rho$ with radius $\rho$.)
Thus the fiber of a boundary point $p'$ , $f(p')=\rho e^{i\theta_0}\in \partial \Delta$   is distributed as
$\{Re^{i(\theta_0+2j\pi)/q}\,|\, j=0,\dots, q-1\}$ in $x$-coordinate with $R=|\rho|^{1/(q+2r)}$ and under the local monodromy
along $\partial D$, they are rotated  by 
$2\pi/q$ angles. 
Thus 
$\pi^{-1}(D^*)$ is connected, where  $D^*:=D\setminus\{p\}$.  As  $\mathbb P^{n-1}\setminus V_f$ is connected,
 any  point $y\in V_g\setminus V_f$ can be connected using the covering structure to one of the  points
 $\pi^{-1}(p')$.
 Here we identify $V_f$ with $V_g\cap \{w=0\}$. As $V_g-V_f=V_g\cap U_\omega$,  $V_g\cap U_\omega$ is connected.

Now we consider the fundamental group, assuming $n=2$ for  simplicity.
$V_g$ is defined by $z_3^{q+r}{\bar z_3}^r-f(\mathbf z,\bar{\mathbf z})$ where $\mathbf z=(z_1,z_2)$.
Consider the pencil lines $L_\eta=\{z_2=\eta z_1\}$ and let $b=(0:0:1)$ be the base point of the pencil.
 Let $\widetilde {\mathbb P}^2$ be the blow-up space at $b$.
 Then  $\widetilde \pi:\widetilde {\mathbb P}^2\to \mathbb P^{1}$ is well defined and $\pi_1(\widetilde {\mathbb P}^2\setminus \widetilde V_g)\equiv \pi_1(\mathbb P^2\setminus V_g)$  with $\widetilde V_g={\widetilde \pi}\inv(V_g)\cong V_g$.
The zero points $f(\bfz,\bar\bfz)=0$ are the locus of singular pencil lines. Take a simple zero $p\in V_f$ and take $p'$ nearby as a base line
and put $L=\pi^{-1}(p')$.
Take generators $\xi_1,\dots,\xi_q$ of $\pi_1(L\setminus V_g\cap L)$ as in Figure \ref{Generators}. The centers
of the small circles are the points of
$L\cap V_g$. We always orient the small circles counterclockwise.
Then the monodromy relations at $p$ is given by
\[
\xi_1=\xi_2=\dots=\xi_q,\,\quad\xi_q\dots\xi_1=e
\]
See \cite{OkaSurvey}. The argument is the exactly same with that of complex algebraic curve with a maximal flex point in Zariski \cite{Za1}.
Thus we get $\xi_1^q=e$ and 
 $\pi_1(\mathbb P^2\setminus V_g)\cong \mathbb Z/q\mathbb Z$.

The assertion 2 is true for any $n\ge 2$. For $n>2$, we take a generic hyperplane $H$ of type $a_1z_1+\dots+a_nz_n=0$
which contains $[0:\dots:0:1]$ and use the surjectivity $\pi_1(H\setminus V\cap H)\to\pi_1(\mathbb P^n\setminus V_g)$.
The defining polynomial of $V_g\cap H$ is also of join type and use an induction argument. Here we do not use the Zariski Hyperplane section theorem \cite{Hamm-Le} (we do not know if the same assertion   holds for mixed hypersurface or not) but we only use the surjectivity  for a non-singular mixed hypersurface of join type which is easy to be shown. We leave this assertion to the readers.
\end{proof}
\begin{figure}[htb]
\setlength{\unitlength}{1bp}
\begin{picture}(600,250)(-250,-620) 
{\includegraphics{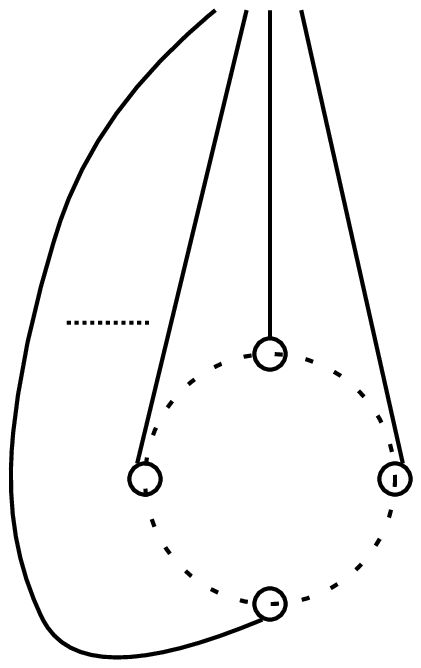}}
\put(-110,-490){$\xi_1$}
\put(-110,-523){$\cdot$}
\put(-140,-470){$\xi_2$}
\put(-145,-486){$\cdot$}
\put(-170,-485){$\xi_{3}$}
\put(-181,-522){$\cdot$}
\put(-230,-540){$\xi_q$}
\put(-146,-559){$\cdot$}
\end{picture}
\caption{Generators of $\pi_1(L-L\cap V)$}\label{Generators}
\end{figure}
\begin{Example}\label{Rhie}
\end{Example}
Consider the Rhie's Lens equation
\[
\vphi_n(z):=\bar z-\frac{z^{n-2}}{z^{n-1}-a^{n-1}}-\frac{\eps}{z}=\frac{g(z,\bar z)}{(z^{n-1}-a^{n-1}) z}=0,\,n\ge 2.
\]
We can choose suitable positive numbers $a,\eps$ so that $0<\eps\ll a\ll 1$ and 
$\vphi_n$ has $5(n-1)$ simple zeros  (see \cite{OkaLens}). 
Let $g(z,\bar z)$ be the numerator of $\vphi_n$ 
and take the homogenization of $g(z,\bar z)$
\begin{multline*}
G(\mathbf z',\bar {\mathbf z'}):=g(z_1/z_2,\bar z_1/\bar z_2)z_2^n \bar z_2\\
= \bar z_1(z_1^n-a^{n-1} z_1z_2^{n-1})-\bar z_2\left (z_1^{n-1}z_2+\eps(z_1^{n-1}z_2-a^{n-1}z_2^{n-1}z_1)  \right)
\end{multline*}
where $\mathbf z'=(z_1,z_2)$.
Consider the join type polynomial and the associated projective curve $C$:
\[
C:\,f(\bfz,\bar{\bfz}):=z_3^n\bar z_3+G(\mathbf z',\bar{\mathbf z'})=0,\quad \mathbf z=(z_1,z_2,z_3).
\]
Observe that $f$ is strongly mixed homogeneous of polar degree $q=n-1$ and radial degree $n+1$.
Consider the affine chart $\{z_2\ne 0\}$ and consider the affine coordinates
$w_3=z_3/z_2,w_1=z_1/z_2$. Then the affine equation takes the form
$w_3^{n}\bar w_3-g(w_1,\bar w_1)=0$. Consider the pencil of lines $L_\eta=\{z_1-\eta z_2=0\}$ or in the affine equation,
$w_1=\eta$.
There are exactly $5(n-1)$ singular pencil lines corresponding to the zeros of $g(w_1,\bar w_1)=0$.
These roots are all simple by the construction.
In a small neighborhood of  any such zero, the projection $\pi:C\to \mathbb C$ is locally equivalent to $w_3^{n}\bar w_3-w_1=0$ or $w_3^{n}\bar w_3-\bar w_1=0$
depending the sign of the zero. Taking a  point $\eta_0$ near some zero of
$g$ and on the line $L_{\eta_0}$, take generators $\xi_1,\dots,\xi_q$ of
 $\pi_1(L_{\eta_0}\setminus C)$ as in Figure \ref{Generators}, we get
that $\xi_1=\dots=\xi_q$ as the monodromy relation. Thus we get $\pi_1(\mathbb P^2\setminus C)=\mathbb Z/q\mathbb Z$.
Note that $L_\infty\cap C$ consists of $q$ simple points. Thus
the Euler number and the genus of $C$ are calculated easily as
\begin{eqnarray*}
\chi(C)&=&(n-1)(2-(5n-5)-1)+5n-5+n-1=17n-5n^2-12,\,\\
g(C)&=&\frac{(5n-7)(n-2)}2.\qquad\qquad
\end{eqnarray*}
In the moduli space of mixed polynomial of polar degree $n-1$ and radial degree $n+1$, the lowest genus is taken by
$z_1^{n}\bar z_1+z_2^n\bar z_2+z_3^n\bar z_3$ which is isotopic to the holomorphic curve of degree $n-1$ and therefore the genus is $(n-2)(n-3)/2$ by Pl\"ucker's formula.
\subsection{Twisted join type polynomials}
Let $f(\bfz)$ be a strongly mixed homogeneous polynomial of polar degree $q$ and radial degree $q+2r$ and consider
 the mixed homogeneous polynomial of $(n+1)$-variables:
\[
 g(\bfz,\bar\bfz,w,\bar w)\,=\, f(\bfz,\bar\bfz)+
\bar z_n w^{q+r}{\bar w}^{r-1}.
\]
$g$ is also strongly mixed homogeneous polynomial.
Recall that 
$f(\bfz,\bar \bfz)$ is  called to be {\em 1-convenient} if
the restriction of $f$ to each coordinate subspace
$f_i:=f|_{\{z_i=0\}}$ is non-trivial for  $i=1,\dots,n$ (\cite{OkaPolar})
  \begin{Theorem}\label{Jarc} \text{(\cite{OkaJarc})}
  Assume that $n\ge 2$ and  $f$ is 1-convenient with a connected Milnor fiber $F_f$
  and let
 $g(\bfz,\bar\bfz,w,\bar w)$ be the twisted join polynomial as above.
  \begin{enumerate}
       \item
            The Milnor fiber  $ F_g=g\inv(1)$ of $g$ is simply connected.
            \item
            The Euler characteristic of $ F_g$ is given by the formula:
            \[
             \chi(F_g)=-(q+r)\chi(F_f)+(q+r+1)\chi(F_{f_n})
                       \]
                       where $f_n:=f|\{z_n=0\}$ and $F_{f_n}=f_n\inv(1)$.
\end{enumerate}
  \end{Theorem}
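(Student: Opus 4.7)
The plan is to analyze $F_g = g^{-1}(1)$ through the natural decomposition $F_g = F_g^0 \sqcup F_g^*$, where $F_g^0 := F_g \cap \{z_n=0\}$ and $F_g^* := F_g \cap \{z_n\neq 0\}$, combined with the projection $\pi\colon F_g^* \to A := \BC^{n-1}\times\BC^*_{z_n}$ sending $(\bfz,w) \mapsto \bfz$. When $z_n = 0$ the twisted term $\bar z_n w^{q+r}\bar w^{r-1}$ vanishes and the equation $g = 1$ reduces to $f_n = 1$ with $w$ free, so $F_g^0 \cong F_{f_n}\times\BC_w$, which is homotopy equivalent to $F_{f_n}$. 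For $\bfz \in A$ the fiber of $\pi$ consists of the $w$'s solving $\bar z_n w^{q+r}\bar w^{r-1} = 1 - f(\bfz)$; this fiber is a single point ($w=0$) precisely over the locus $F_f^* := F_f \cap A$, and a finite set of nonzero points otherwise. Thus $F_g^*$ is a branched cover over $A$ whose branch locus is $F_f^*$.

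With this structure in place, I would establish simply-connectedness by van Kampen's theorem applied to the cover $F_g = V_0 \cup V_1$, where $V_0$ is a tubular neighborhood of $F_g^0$ (which deformation-retracts onto $F_{f_n}$) and $V_1 := F_g \setminus \overline{V_0}$ lies inside $F_g^*$ and inherits the branched-cover structure. The intersection $V_0 \cap V_1$ is a collar around $\{z_n=0\}$, and two mechanisms conspire to kill any nontrivial generator of $\pi_1$: loops around $\{z_n = 0\}$ become trivial on $V_0$ thanks to the free $w$-direction, and the explicit monodromy of the branched covering of $A$ over the branch locus $F_f^*$ provides the remaining relations. For the Euler characteristic I would use additivity of compactly supported Euler characteristic: $\chi_c(F_g^0) = \chi(F_{f_n})\cdot\chi_c(\BC) = \chi(F_{f_n})$, and $\chi_c(F_g^*)$ is computed by combining the contribution of the branch locus with that of the generic multi-sheeted cover, using $\chi_c(A) = 0$. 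A careful bookkeeping of the fiber counts, weighted by the radial degrees of the twisted monomial $w^{q+r}\bar w^{r-1}$, yields the stated coefficients $-(q+r)$ and $(q+r+1)$.

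The main obstacle will be the fundamental-group calculation: one must verify that the branched-covering monodromy on $F_g^*$, in combination with the inclusions from $F_g^0$ and from the central fiber $F_f = F_g \cap \{w=0\}$, suffices to trivialize $\pi_1(F_g)$. The 1-convenience hypothesis on $f$ is essential here, since it guarantees that $F_{f_n}$ is a nontrivial Milnor fiber whose inclusion $F_{f_n} \hookrightarrow F_f$ is rich enough for the sliding arguments to succeed, while the connectedness of $F_f$ provides the base case for the monodromy analysis. The remainder of the proof amounts to carrying out this contraction argument carefully and checking its compatibility with the global Milnor fibration structure of $g$.
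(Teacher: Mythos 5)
This theorem is quoted from \cite{OkaJarc}; the present paper contains no proof of it, so your proposal has to stand on its own. Your structural setup is the natural one and is correct as far as it goes: on $\{z_n=0\}$ the twisted term vanishes, so $F_g\cap\{z_n=0\}\cong F_{f_n}\times\BC$, and over $A:=\{z_n\neq 0\}$ the fibre of $(\bfz,w)\mapsto\bfz$ is the solution set of $w^{q+r}\bar w^{\,r-1}=(1-f(\bfz))/\bar z_n$. The first genuine problem is in the fibre count: the number of solutions of $w^{q+r}\bar w^{\,r-1}=c$ for $c\neq 0$ is governed by the \emph{polar} degree $(q+r)-(r-1)=q+1$ of that monomial (writing $w=\rho e^{i\theta}$, the modulus $\rho$ is determined uniquely by the radial degree and $\theta$ has exactly $q+1$ choices), not by its radial degree as your ``weighted by the radial degrees'' step suggests. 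Carrying out your own additivity computation therefore gives
\[
\chi(F_g)=\chi(F_{f_n})+(q+1)\bigl(0-\chi_c(F_f\cap A)\bigr)+\chi_c(F_f\cap A)
=-q\,\chi(F_f)+(q+1)\,\chi(F_{f_n}),
\]
which is \emph{not} the displayed formula. Your claim that careful bookkeeping ``yields the stated coefficients $-(q+r)$ and $(q+r+1)$'' is unsubstantiated and, as far as I can check, false: take $f=z_1^{3}\bar z_1+z_2^{3}\bar z_2$ (so $q=2$, $r=1$; $f$ is $1$-convenient with connected fibre $F_f\cong\BC^*$, $\chi(F_f)=0$, $\chi(F_{f_2})=2$). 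A direct stratified count gives $\chi(F_g)=6$, matching $-q\chi(F_f)+(q+1)\chi(F_{f_2})$, while the formula in the statement gives $8$. So either the statement as transcribed here carries an error in the coefficients, or your method is missing a contribution; in either case you cannot simply assert agreement --- you must do the count and confront the discrepancy explicitly.

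The second gap is the fundamental group, where what you offer is a plan rather than an argument. In your van Kampen decomposition $\pi_1(V_0)\cong\pi_1(F_{f_n})$, which has no reason to be trivial, and nothing in the proposal explains how these classes die in $F_g$; likewise ``the explicit monodromy \dots provides the remaining relations'' is precisely the content of the theorem, not a proof of it. At a minimum you would need: (i) connectivity of $F_g\cap\{z_n\neq0\}$ as a $(q+1)$-fold cover of $A\setminus F_f$, using that the local monodromy around the branch locus $F_f\cap A$ is a full $(q+1)$-cycle together with connectedness of $F_f$; (ii) a reason why the meridians of $F_f\cap A$ normally generate enough of $\pi_1$ of the base; and (iii) a concrete use of $1$-convenience, which you invoke only as a slogan. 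The established route for statements of this type (the join theorem of \cite{Molina} and the treatment of twisted joins in \cite{OkaPolar}) builds a join-type homotopy model of $F_g$ from the $S^1\times\BR^{+}$-action, from which simple connectivity follows formally; your branched-cover van Kampen approach is not obviously salvageable without substantial new input at exactly the points listed above.
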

Assume that $n=2$ and $f(\bfz,\bar\bfz)$ has an isolated singularity at the origin. Then we have
\begin{Corollary}$V=\{g=0\}\subset \mathbb P^2$ is a non-singular 
mixed curve and 
\newline
$\pi_1(\mathbb P^2-V)\cong \mathbb Z/q\mathbb Z$.
\end{Corollary}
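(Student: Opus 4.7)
The plan is to deduce the corollary by combining Theorem \ref{Jarc} with the exact homotopy sequence of Proposition \ref{formula1}(2). The twisted join $g(\bfz,\bar\bfz,w,\bar w)=f(\bfz,\bar\bfz)+\bar z_2\, w^{q+r}\bar w^{r-1}$ is again strongly mixed homogeneous of polar degree $q$, so once $V=\{g=0\}\subset \mathbb P^2$ is shown to be non-singular, the setting of Proposition \ref{formula1}(2) is in force and gives the short exact sequence
\[
1 \longrightarrow \pi_1(F_g) \longrightarrow \pi_1(\mathbb P^2\setminus V) \longrightarrow \mathbb Z/q\mathbb Z \longrightarrow 1.
\]
The crucial input is that Theorem \ref{Jarc} forces $\pi_1(F_g)=1$, after which this sequence collapses to the desired isomorphism $\pi_1(\mathbb P^2\setminus V)\cong \mathbb Z/q\mathbb Z$.

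The first step I would carry out is the non-singularity of $V$. Since $g$ is strongly mixed homogeneous, non-singularity of $V$ in $\mathbb P^2$ is equivalent to $\widetilde V_g=g^{-1}(0)\subset \mathbb C^3$ having an isolated singularity at the origin. A direct mixed-Jacobian computation handles the two cases: at a candidate singular point with $w\neq 0$, the $w$- and $\bar w$-derivatives of the second summand force $\bar z_2 = 0$ and then the remaining derivative equations reduce to the critical-point conditions for $f$ at a point of $\{z_2=0\}$; at a candidate singular point with $w=0$ one is immediately reduced to a singular point of $\widetilde V_f$. Since $f$ has only the origin as its singular point, and since in the $n=2$ case strong mixed homogeneity combined with isolated singularity forces the 1-convenience condition $f|_{z_i=0}\not\equiv 0$ (otherwise one would produce a one-parameter family of singular points along a coordinate axis), both cases collapse to the origin.

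With $V$ now smooth, Theorem \ref{Jarc} applies: its hypotheses (1-convenience and connectedness of the Milnor fiber $F_f$) hold automatically in this 2-variable isolated-singularity setting, so $\pi_1(F_g)=1$. Feeding this into the exact sequence above completes the proof. The main obstacle I foresee is exactly the first step: the mixed-Jacobian computation pinning the singular locus of $\widetilde V_g$ to the origin is more delicate than in the holomorphic setting because of the presence of $\bar w^{r-1}$ and the twisting factor $\bar z_2$, and one must carefully verify that the two contributions do not conspire to produce a spurious singular locus off the origin. Once this is dispatched, the remainder of the argument is purely formal.
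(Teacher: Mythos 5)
Your proposal is correct and follows essentially the same (implicit) route as the paper: Theorem \ref{Jarc} gives $\pi_1(F_g)=1$, and the exact sequence $1\to\pi_1(F_g)\to\pi_1(\mathbb P^2\setminus V)\to\mathbb Z/q\mathbb Z\to 1$ from the Proposition in the introduction then yields $\pi_1(\mathbb P^2\setminus V)\cong\mathbb Z/q\mathbb Z$, the non-singularity of $V$ being the elementary Jacobian check you describe. One small inaccuracy: 1-convenience of $f$ is \emph{not} automatic from the isolated-singularity hypothesis in two variables (e.g.\ $f=z_1^{q+r-1}\bar z_1^{r}z_2+z_2^{q+r}\bar z_2^{r}$ has an isolated singularity at the origin yet $f|_{z_2=0}\equiv 0$); it should instead be read as a standing hypothesis inherited from Theorem \ref{Jarc}, which leaves your argument otherwise intact.
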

\begin{Example}
\end{Example}Consider mixed curve defined by
\[
f_I'=z_1^{q+r}{\bar z_1}^{r-1}\bar z_2+z_2^{q+r}{\bar z_2}^{r-1}{\bar z_3}+z_3^{q+r}{\bar z_3}^{r},\,\,
               \text{(Tree type b)}
               \]
As $f_I'$ is simplicial
and also of twisted join type as 
$f_I'=z_1^{q+r}{\bar z_1}^{r-1}\bar z_2+(z_2^{q+r}{\bar z_2}^{r-1}{\bar z_3}+z_3^{q+r}{\bar z_3}^{r})$, we show that Milnor fiber is simply connected and $\pi_1(\mathbb P^2\setminus C)\cong \mathbb Z/q\mathbb Z$.
 Here as $(z_2^{q+r}{\bar z_2}^{r-1}{\bar z_3}+z_3^{q+r}{\bar z_3}^{r})$ is not 1- convenient,
 Theorem \ref{Jarc} can not be applied directly.
Let us see this assertion directly. We take the coordinate chart $U_2:=\{z_2\ne 0\}$ and put $w_1=z_1/z_2, w_3=z_3/z_2$. 
Then affine equation of $C$ in $U_2$
is
\[
f(w_1,w_3)=w_1^{q+r}{\bar w_1}^{r-1}+\bar w_3+ w_3^{q+r}{\bar w_3}^r.
\]
We consider the pencil $L_\eta:=\{w_3-\eta=0\},\,\eta\in \mathbb C$.
It is easy to see the branching locus is  $q+2$ points given by
\[
\Sigma:=\{w_3\,|\, \bar w_3(w_3^{q+r}{\bar w_3}^{r-1}+1)=0\}. 
\]
The base point of the pencil is $b=[1:0:0]$ and note that $b\in C$.
$L_\eta\cap C$ is $q+1$ points over $\mathbb C\setminus \Sigma$ and 1 point over $\Sigma$.
Taking a generic pencil $L_{\eta_0}$ near a branching point $w\in \Sigma$ and take  generators
$\xi_1,\dots, \xi_{q+1}$ of $\pi_1(L_{\eta_0}\setminus C)$ similarly as those in Figure \ref{Generators},
we get cyclic monodromy relations at each point of $\Sigma$:
\[
\xi_1=\xi_2=\cdots=\xi_{q+1}.
\]
This is enough to conclude that $\pi_1(\mathbb P^2\setminus C)$ is abelian and therefor
isomorphic to
$H_1(\mathbb P^2\setminus C)\cong \mathbb Z/q\mathbb Z$.
As for the Euler characteristic, we get $\chi(C)=-(q+1)(q-1)+q+3=-q^2+q+4$. Thus the genus of $C$ is $(q+1)(q-2)/2$.

\section{Non-trivial examples}
Let $F(\bfz,\bar\bfz)$ be a strongly non-degenerate mixed homogeneous polynomial
of three variables $\bfz=(z_1,z_2,z_3)$ of polar degree $q$ and radial degree $q+2r$
and we consider the projective mixed curve
\[
C:=\{[\bfz]\in \mathbb P^2\,|\, F(\bfz,\bar \bfz)=0\}.
\]
We study the geometric  structure of $C$ and the fundamental group $\pi_1(\mathbb P^2-C)$ using  the pencil $L_\eta:=\{z_2=\eta z_3\},\,\eta\in \mathbb C$, or equivalently the projection
\[
p: (\mathbb P^2,C)\to \mathbb P^1,\quad [\bfz]\mapsto [z_2,z_3].
\]
Take the affine coordinate $U_3:=\{z_3\ne 0\}$ with coordinate functions $(z,w)$ with $z=z_1/z_3,\,w=z_2/z_3$. Then $C\cap U_3$ is defined by 
$ f(z,w,\bar z,\bar w)=F(\bfz,\bar \bfz)/z_3^{q+r}{\bar z_3}^r=0$. Let $\Sigma\subset\mathbb P^1 $ be the branching locus of $p$.
\subsubsection{Holomorphic case} If $F$ is homogeneous polynomial without complex conjugate variables,
$\Sigma$ is described  by the discriminant locus of $f$ as a polynomial in $z$. Put
$\si(w):=\discrim_z  f(z,w)$. Thus $\Sigma$ is a finite points $\Sigma=\{\rho_1,\dots,\rho_\ell\}$ given 
by $\sigma(w)=0$. For any  $\rho_j\in \Sigma$ and $\rho_{j,k}\in p^{-1}(\rho_j)$, $C$ is locally a cyclic covering of order $s_{j,k}$ 
at $  \rho_{j,k}$ where 
 $s_{j,k}$ is the multiplicity of $ \rho_{j,k}$ 
in $p^{-1}(\rho_j)$ as the root of $f(z,\rho_j)=0$ which is equal to the intersection multiplicity of $L_{\rho_j}$ and $C$ at $\rho_{j,k}$.

\subsubsection{Mixed polynomial case} Let $F$ be a mixed homogeneous polynomial.
Usually it is not easy to compute $\Sigma$.
Instead of computing $\Sigma$, we proceed as follows. Let $z=x+yi$ and $w=u+vi$ and write $f$ as 
$f(x,y,u,v):=g(x,y,u,v)+ih(x,y,u,v)$ where $g$ and $h$ are real polynomials which are the real and imaginary part of $f$ respectively.
Consider the complex algebraic variety 
\[
C(\mathbb C):= \{(x,y,u,v)\in\mathbb C^4 \,|g(x,y,u,v)=h(x,y,u,v)=0\}
\]
which is the complexification of our curve. Note that $C(\mathbb C)\cap \mathbb R^4=C$.
The branching locus of $p_{\mathbb C}: C(\mathbb C)\to \mathbb C^2$ is obtained by a Groebner basis calculation from the ideal
$[g,h,J]$
where $J=\frac{\partial g}{\partial x}\frac{\partial h}{\partial y}-
\frac{\partial g}{\partial y}\frac{\partial h}{\partial x}$ and $[g,h,J]$ is the ideal generated by $g,h,J$. The defining ideal is generated by the polynomials
$\mathbb C[u,v]\cap [g,h,J]$.
 It is usually a principal ideal and the generating polynomial $R(u,v)$ of this ideal
describe the discriminant locus of complexified variety.
We define the branching locus $\Sigma_{\mathbb R}$  by the intersection $\Sigma_{\mathbb C}\cap \mathbb R^2$. Take a point $w\in \Sigma_{\mathbb R}$.
It is not always true that a point $w\in \Sigma_{\mathbb R}$ is a branching point of $p:C\to \mathbb R^2$.
 It might come from the branching on the complex point of $C(\mathbb C)$ outside of $C$.
 That is $\Sigma\subset \Sigma_{\mathbb R}$ but the equality does not hold in general. See Example 2 below. Also it might have some  point $\eta_0$ such that $L_{\eta_0}\cap C$ contains a 1-dimensional intersection.  See Remark \ref{remark1}.
 
There are some cases for which this branching loci are comparatively simple.
Suppose that $f$ is a join type polynomial of $z_1^{q+r}{\bar z_1}^r$ and strongly mixed homogeneous convenient polynomial 
$K(z_2,z_3,\bar z_2,\bar z_3)$ of two variables $z_2$
and $ z_3$. 
Then affine equation takes the form
$f(\bfz,\bar \bfz)=z^{q+r}{\bar z}^r+k(w,\bar w)=0$ with respect to the affine coordinates $z=z_1/z_3$ and $w=z_2/z_3$.
 By the non-degeneracy assumption, 
the roots of $k(w,\bar w)=0$ are all simple. Then the  branching locus $\Sigma$ is nothing but the set of those roots and over any of these roots, 
the projection is locally equivalent to $q$ cyclic coverings
$z^{q+r}{\bar z}^r-w=0$ or $z^{q+r}{\bar z}^r-\bar w=0$ depending the sign of the root.
\begin{figure}[htb]
\setlength{\unitlength}{1bp}
\begin{picture}(600,200)(-100,-600)
{\includegraphics [width=8cm,height=6cm]
{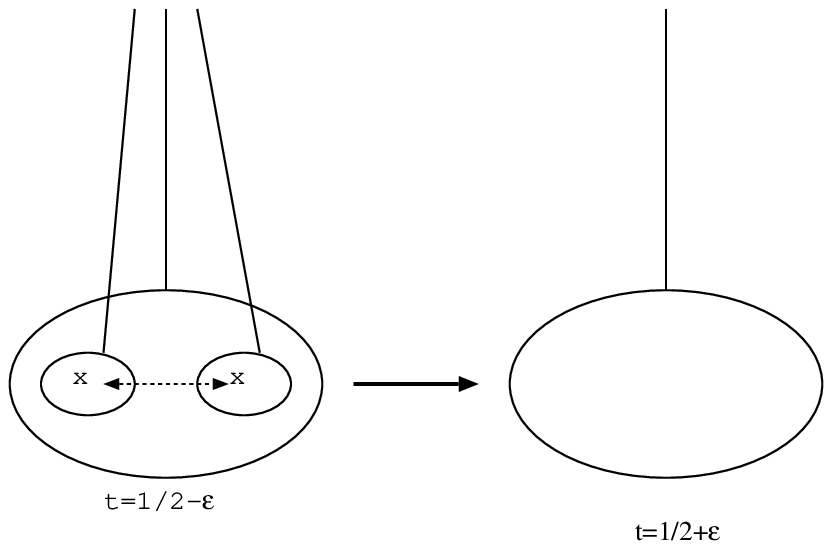}}
\put(-200,-520){$\omega_i$}
\put(-180,-560){$\xi_{2i}$}
\put(-220,-560){$\xi_{2i-1}$}
\put(-260,-560){$D_i$}
\put(-120,-520){$\omega_i$}
\put(-120,-560){$\delta_i$}
\put(-45,-520){$\omega_i$}
\end{picture}
\vspace{0cm}
\caption{Vanishing loops}\label{vanishing}
\end{figure}

However for a generic mixed polynomial, $\Sigma_{\mathbb R}$ and $\Sigma$ are  much more complicated. Usually
they have real dimension 1 components and also it can have isolated points. We assume that for each $\eta$, $L_{\eta}\cap C$ is a finite point. We define $\gamma(\eta)$ by the cardinality of $L_{\eta}\cap C$.　We divide $\{\mathbb C \setminus \Sigma,\Sigma-S(\Sigma),S(\Sigma)\}$ by $\gamma$-values 
where $S(\Sigma)$ is the singular locus of $\Sigma$ and let $\mathcal D$ be the corresponding division.
We call $\mathcal D$ {\em the $\gamma$-division} of the parameter space. 2-dimensional
 connected component $V\in \mathcal D$ (respectively 1-dimensional $L$, 0-dimensional $P$) is called a {\em region }(resp.  an {\em edge},  a {\em  vertex}).
A region $V$ is called {\em  regular} if
 the inclusion map $V\subset \bar V$ is a homotopy equivalence. An edge $L$ is called {\em regular} if 
 there exists exactly two regions, say $V_1,V_2$ 
 whose boundaries contain $L$ and  $\gamma(L)=(\gamma(V_1)+\gamma(V_2))/2$.
 A vertex $P$ is called {\em regular} if there exists at most two regions which contains $P$ in its boundary.

  For an regular edge $M\in \mathcal D$,
suppose that two regions $S_1,S_2$ are touching each other along $M$ and suppose that $\gamma(S_1)>\gamma(S_2)$. Take 
a  point $a\in M$ and a small transversal path $\si:[0,1]\to \mathbb R^2=\mathbb C$ so that $\si(t)\in S_1$ for $t<1/2$, $\si(1/2)=a$
and  $\si(t)\in S_2$ for $t>1/2$. Let  $\gamma:=(\gamma(S_1)-\gamma(S_2))/2$.
Then  for a sufficiently small $\eps>0$ and $1/2-\eps\le \forall t<1/2$,
$p^{-1}(\si(t))$ consists of $\gamma(S_1) $ points, say $\xi_1(t),\dots, \xi_{\gamma(S_1)}(t)$ and among them there exist
$\gamma$ pairs of points $\{\xi_{2i-1}(t),\xi_{2i}(t)\},\,i=1,\dots, \gamma$ and we can choose  continuous family of disjoint $\gamma$ disks $D_i(t),i=1,\dots,\gamma$ in
the pencil line $p^{-1}(\si(t))=\mathbb R^2$  
and contain only the corresponding pair of roots so that when $t$ goes to $1/2$, 
two roots approach each other in the disk $D_i(t)$ and collapse to $\de_i\in L_a\cap M$, a double point  and then they disappear for $t>1/2$. 
These pairs of roots  $\{\xi_{2i-1}(t),\xi_{2i}(t)\}$ as roots of a polynomial equation  $f(z,\si(t))=0$ has positive and negative sign. 
Take a base point $b=[1:0:0]$ of the fundamental group at the base
point of the pencil.  Consider a loop
$\omega_i\in \pi_1(L_{\si(1/2-\eps)}-C,b)$ represented by the boundary loop of $D_i(1/2-\eps)$, connected to the base point by a path outside of $L_{\si(1/2-\eps)}\setminus\cup_{i=1}^k D_{i}( \si(1/2-\eps))$.
Then we get  the following relation for $t:1/2-\epsilon\to 1/2+\epsilon$
\[
\omega_i=e,\quad i=1,\dots, \gamma.
\]
Take elements $\xi_{2i-1},\xi_{2i}$ as in Figure \ref{vanishing}. Then this implies that
\[
\omega_i=\xi_{2i-1}\xi_{2i}=e\,\,\text{or equivalently}\,\,\xi_{2i-1}=\xi_{2i}^{-1},\,i=1,\dots,\gamma.
\]
We call these relations {\bf vanishing monodromy relations}.
\subsection{Example 1}
Now we present several examples which are not either simplicial or of join type but the complement has an abelian fundamental group.
\subsubsection{Example 1-1}
Consider the following mixed curve of polar degree 1
\[C_t:\quad F(\bfz,\bar\bfz):= {z_1}^{2}{ \bar z}_1+{z_2}^{2}
 \bar{z}_2 +z_3^2\bar{z}_3 +t\,{z_1}^{2}z_2{\bar z}_3=0
\]
with  $t\in \mathbb C$
and let $C_t$ be the corresponding projective curve. 
Let $M_t=F\inv(1)$ be the corresponding Milnor fiber.
Then $C_0$ is a mixed Brieskorn type and  isotopic to the standard line $z_1+z_2+z_3=0$, namely a sphere $S^2$ (see \cite{OkaBrieskorn})
and $M_0$ is diffeomorphic to the plane $\mathbf C$. 
This is true for any small $t$.　Observe  that $\{z_3=0\}\cap C_t=\{[1:-1:0]\}$. 

We are interested in  $C:=C(-4):\, z_1^2\bar z_1+z_2^2\bar z_2+z_3^2\bar z_3-4 \,z_1z_2\bar z_3$.
We use the notation $M_{-4}=M$ for simplicity.
Take the affine coordinate
$z=z_1/z_3,\, w=z_2/z_3$.
Then the affine equation is given as 
\[
C:\quad z^2\bar z+w^2 \bar w+1-4\, z w = 0.
\]
 To compute the Euler characteristic $\chi(C)$ and the fundamental group \nl $\pi_1(\mathbf P^2\setminus C)$,
we consider the pencil $L_\eta:=\{w=\eta\},\,\eta=u+vi\in \mathbb C$. The branching locus $\Sigma_{\mathbb R}$ is given by $R(u,v)=0$
where
\begin{multline*}
R(u,v)=
27+11642\,{v}^{2}{u}^{4}-2640\,{u}^{7}{v}^{2}+405\,{u}^{4}{v}^{8}+162
\,{u}^{10}{v}^{2}+16438\,{v}^{6}\\
-6736\,{v}^{6}{u}^{3}-350\,{u}^{6}+405
\,{v}^{4}{u}^{8}+162\,{u}^{2}{v}^{10}+27\,{v}^{12}+540\,{v}^{6}{u}^{6}
\\+28430\,{u}^{2}{v}^{4}
-148\,{u}^{3}-2196\,u{v}^{2}
-7032\,{u}^{5}{v}^{4
}+27\,{u}^{12}-2196\,u{v}^{8}-148\,{u}^{9}.
\end{multline*}
See Appendix 1 (\S \ref{Appendix1}) for the practical computation of  $R(u,v)$.
Its graph of the zero locus set $R=0$ is  given as Figure  \ref{NJ}.
Let $A$ be the  bounded region of $\mathbb C\setminus \Sigma_{\mathbb R}$ and let $U$ be the complement
 $\mathbb P^1\setminus\bar A$. There are four singular points $V_i,i=1,\dots,4$ of the boundary of $\bar A$. 
 Actually $-1$ is an isolated point of $\Sigma_{\mathbb R}$ but $L_\eta\cap C$ consists of one simple point and it does not give any
  branching of the projection $p:C\to \mathbb C$. Thus $-1\notin \Sigma$.

\begin{figure}[htb]
\setlength{\unitlength}{1bp}
\includegraphics[width=10cm,height=7cm]{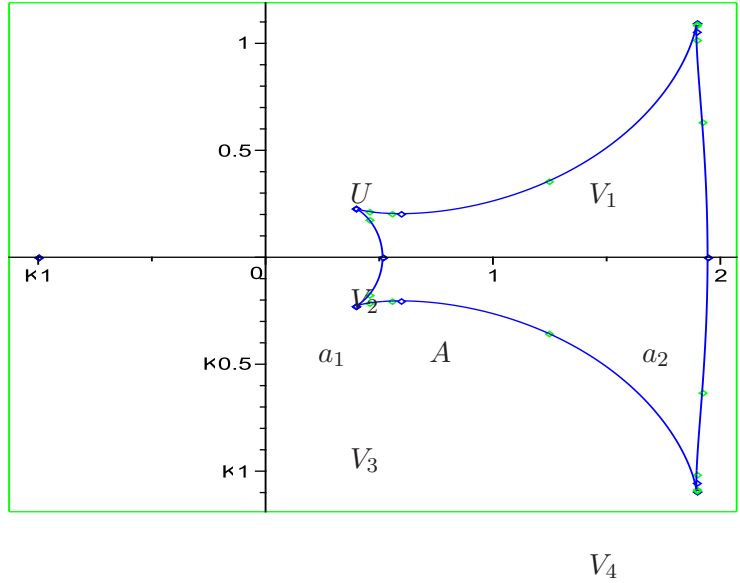}
\put(-120,60){$A$}
\put(-162,60){$a_1$}
\put(-40,60){$a_2$}
\put(-60,120){$V_1$}
\put(-150,120){$U$}
\put(-150,20){$V_3$}
\put(-150,80){$V_2$}
\put(-60,-20){$V_4$}
\vspace{1.5cm}
\caption{Graph of $R=0$, Example 1-1}\label{NJ}
\end{figure}
As the polar degree is 1, the number of intersection $L_\eta\cap C$ counted with sign is always 1.
Observe that $L_\eta\cap C$ consist of 3 simple roots of $f(z,\eta)=0$ for any
$\eta\in A$. 
Observe further that  over any point $\eta$ of the complement $U$ of $\bar A$, $L_\eta\cap C$ has a unique simple root, i.e. $\gamma(U)=1$ and 
$\gamma(A)=3$. For any smooth  boundary point $\eta$ of $\partial {\bar A}$, $L_\eta\cap C$ has two points, one simple and one double point. (Strictly speaking, there does not exists of the notion of multiplicity in the mixed roots. See \cite{MixIntersection}. Here we use the terminology ``double root'' in the sense that it is a limit of two simple roots).
As for four singular points, we have  $\gamma(V_i)=1$, $i=1,\dots,4$.
Let $\overline{a_1a_2}$ be the line segment cut by $A\cap \{v=0\}$ where  $a_1\approx 0.51, a_2\approx 1.94$.
For any $a_1<\eta<a_2$, $L_\eta\cap C$ has three simple points which are all real. This can be observed by the graph of $f=0$ restricted on  the real plane section $(w,z)\in \mathbb R^2$ ( Figure \ref{NJ-real}).
Consider the limit of 
$L_\eta\cap C$ when $\eta$ goes to $a_1$ or $a_2$ along the real line segment $\overline{a_1a_2}$. 
There are two real positive roots and one real negative root and at the both end, two positive roots collapse to a double point,
which is clear from Figure \ref{NJ-real}. 

\begin{figure}[htb]
\setlength{\unitlength}{1bp}
\includegraphics[width=10cm,height=7cm]{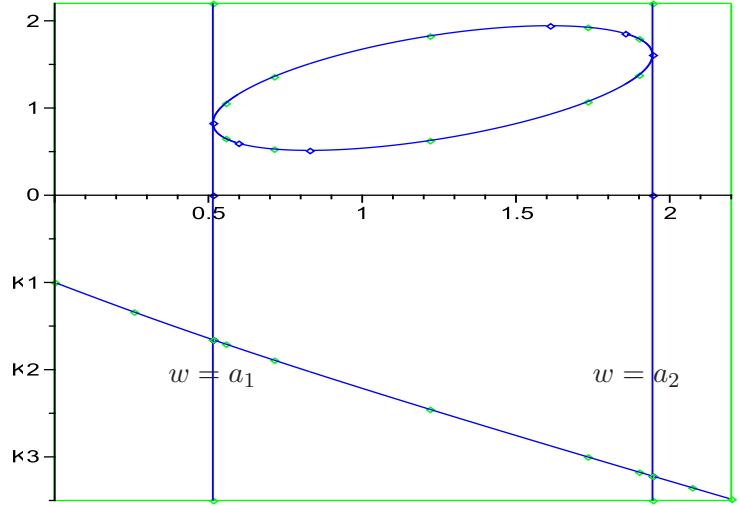}
\put(-220,50){$w=a_1$}
\put(-60,50){$w=a_2$}
\vspace{1.5cm}
\caption{Graph of $f=0$, Example 1-1}\label{NJ-real}
\end{figure}

Using these data, we can compute the Euler characteristic as 
\begin{eqnarray*}
\chi(C)&=&\chi(p^{-1}(\bar A))+\chi(p^{-1}(U))\\
&=&-1 +1=0.
\end{eqnarray*}
This  implies $C$ is a  torus and $\chi(\mathbb P^2-C)=3-0=3$. 
We  claim
\begin{Proposition}
\begin{enumerate}
\item $\pi_1(\mathbb P^2-C)=\{e\},\quad \pi_1(M)=\{e\}$.
\item$\chi(M)=3,\,H_1(M)=0,\,H_2(M)=2$.
\end{enumerate}
\end{Proposition}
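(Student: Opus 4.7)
Since the polar degree is $q=1$, Proposition~\ref{formula1}(3) says the Hopf projection $\pi\colon M\to\mathbb{P}^2\setminus C$ is a diffeomorphism, so $\pi_1(M)\cong\pi_1(\mathbb{P}^2\setminus C)$ and $H_*(M)\cong H_*(\mathbb{P}^2\setminus C)$. I would handle both parts on the projective side.

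For part (1), I would run the Zariski--van Kampen argument on the pencil $L_\eta=\{w=\eta\}$ using the data already assembled: $\gamma(A)=3$, $\gamma(U)=1$, and $L_\infty\cap C=\{[1:-1:0]\}$ is a single simple point. Pick a base pencil line $L_{\eta_0}$ with $\eta_0\in A$; its three simple intersections with $C$ give meridian generators $\xi_1,\xi_2,\xi_3$ of $\pi_1(L_{\eta_0}\setminus C,b)$ arranged as in Figure~\ref{Generators}. The boundary $\partial A$ decomposes into four smooth edges meeting at the vertices $V_1,\ldots,V_4$; each is a regular edge of multiplicity $(\gamma(A)-\gamma(U))/2=1$, so contributes exactly one vanishing monodromy relation of the form $\xi_i\xi_j=e$ after transport to $b$. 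Identifying \emph{which} pair of meridians collapses on each edge is the heart of the argument: the two edges meeting the real $u$-axis can be read off directly from Figure~\ref{NJ-real}, where the three real roots of $f(z,\eta)=0$ along $\overline{a_1 a_2}$ consist of two positive and one negative real root and the two positive ones coalesce as $\eta\to a_1^{+}$ or $\eta\to a_2^{-}$; the two remaining edges are the images of these under the involution $\eta\mapsto\bar\eta$, a symmetry of $\Sigma_{\mathbb R}$ (visible in $R(u,-v)=R(u,v)$) which comes from the invariance of $\{f=0\}$ under simultaneous complex conjugation $(z,w)\mapsto(\bar z,\bar w)$. Tracking these conjugating paths in $A$ to express the four relations as words in $\xi_1,\xi_2,\xi_3$ is where I expect the main difficulty. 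Once this bookkeeping is done, the four vanishing relations, combined with the projective big-loop relation coming from the single point $L_\infty\cap C$ and the trivial monodromy on the simply connected disk $U\cup\{\infty\}\subset\mathbb{P}^1$ (where $\gamma\equiv 1$), should collapse the three-generator presentation to the trivial group. This gives $\pi_1(\mathbb{P}^2\setminus C)=\{e\}$, hence $\pi_1(M)=\{e\}$ and $H_1(M)=0$ by Hurewicz.

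For part (2), Proposition~\ref{formula1}(1) gives $\chi(M)=\chi(F)=q\,\chi(\mathbb{P}^2\setminus C)=1\cdot(3-\chi(C))=3$, using the Euler-characteristic computation immediately preceding the proposition which shows $C$ is a torus. For $H_2(M)$ I would invoke Lefschetz duality $H_k(\mathbb{P}^2\setminus C)\cong H^{4-k}(\mathbb{P}^2,C)$ together with the cohomology long exact sequence of the pair $(\mathbb{P}^2,C)$, using $H^*(\mathbb{P}^2)\cong\mathbb{Z}[x]/(x^3)$ and $H^*(C)\cong H^*(T^2)$: the restriction $H^2(\mathbb{P}^2)\to H^2(C)$ sends the generator to the class of the $0$-cycle $L\cap C$, whose signed degree equals the polar degree $q=1$, hence is an isomorphism $\mathbb{Z}\to\mathbb{Z}$. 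The exact sequence then forces $H^2(\mathbb{P}^2,C)\cong\mathbb{Z}^2$, equivalently $H_2(M)\cong\mathbb{Z}^2$, cross-checking the computed value $\chi(M)=3$.
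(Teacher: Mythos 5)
Your reduction to the projective side via the $q=1$ diffeomorphism is the right (and the paper's) starting point, and your part (2) is fine --- indeed more explicit than the paper, whose proof only treats the fundamental group and relies on the Euler characteristic computation given just before the statement. The problem is in part (1), at exactly the step you yourself label ``the heart of the argument'' and then leave undone. The relations you actually extract are $\xi_1\xi_2\xi_3=e$ and, from the coalescence of the two positive real roots at $a_1$ and at $a_2$ (Figure \ref{NJ-real}), the single relation $\xi_2\xi_3=e$; these give only $\xi_1=e$ and $\xi_2=\xi_3^{-1}$, i.e.\ a cyclic group generated by $\xi_3$. Everything therefore hinges on the relation carried by the top and bottom edges of $\partial A$, and your shortcut for producing it fails: the two edges of $\partial\bar A$ meeting the real axis (the arcs through $a_1$ and through $a_2$, joining $V_2$ to $V_3$ and $V_1$ to $V_4$) are each \emph{invariant} under $\eta\mapsto\bar\eta$, while the top and bottom edges are conjugates \emph{of each other}. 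So the conjugation symmetry only relates the two edges you have not analyzed to one another; it does not determine which pair of meridians collapses there, and a priori it could again be the pair $\{\xi_2,\xi_3\}$, which would leave you with $\mathbb Z$ rather than the trivial group.

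The paper supplies the missing relation by an explicit deformation: it slides $\eta$ from $1$ vertically to $1+v_0i$ with $(1,v_0)\in\partial A$ on the top edge, tracks how the three loops are dragged (Figure \ref{moveExample1}), and reads off the vanishing relation $(\xi_2^{-1}\xi_1\xi_2)\xi_3=e$; combined with $\xi_1=e$ this kills $\xi_3$ and hence the whole group. Some such concrete identification of the collapsing pair, together with the conjugating word, is unavoidable if you want to finish along the lines you propose. Alternatively, you could close the gap more cheaply: granting the surjectivity of $\pi_1(L_{b_0}\setminus C)\to\pi_1(\mathbb P^2\setminus C)$ (which both you and the paper invoke from \S\ref{surjective}), the two relations you already have exhibit $\pi_1(\mathbb P^2\setminus C)$ as a quotient of the cyclic group $\langle\xi_3\rangle$, hence abelian, hence equal to $H_1(\mathbb P^2\setminus C)=\mathbb Z/q\mathbb Z=\{e\}$ --- the same device the paper uses in Example 1-2 and for the twisted join. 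As written, however, your argument stops short of a proof.
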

\begin{proof}
We  first compute the fundamental group. Put $b_0=1$ and we take $L_{b_0}$ as a fixed regular pencil line.
Then $L_{b_0}\cap C=\{x_1,x_2,x_3\}$ where
\[x_1<0 <x_2<x_3.\]
See Figure \ref{NJ-real}.
It is not hard to see that 
$\pi_1(L_{b_0}\setminus C\cap L_{b_0})\to \pi_1(\mathbb P^2\setminus C)$ is surjective.
See \S \ref{surjective} for an explanation in detail.
Take generators $\xi_1,\xi_2,\xi_3$ of $\pi_1(L_{b_0}\setminus C\cap L_{b_0})$ as in Figure \ref{generatorsExample1}.
They are oriented counterclockwise.
First, as a vanishing relation at infinity, they satisfy the relation
\begin{eqnarray}
\xi_1\xi_2\xi_3=e.
\end{eqnarray}

\begin{figure}[htb]
\setlength{\unitlength}{1bp}
\begin{picture}(600,200)(-150,-620)
{\includegraphics [width=6cm,height=4cm]
{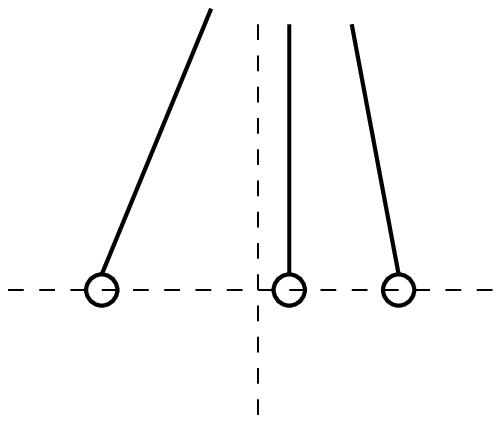}}
\put(-140,-520){$\xi_3$}
\put(-123,-550){$\cdot$}
\put(-170,-520){$\xi_2$}
\put(-159,-550){$\cdot$}
\put(-220,-520){$\xi_1$}
\put(-222,-550){$\cdot$}
\end{picture}
\vspace{-2cm}
\caption{Generators of $\pi_1(L_{b_0}-C\cap L_{b_0})$}\label{generatorsExample1}
\end{figure}
When $\eta$ moves on the interval $[a_1,a_2]$ from $\eta=b_0$ to $a_1$ or $a_2$, we see that two positive roots collapse to a point and disappear for $\eta<a_1$ or $\eta>a_2$. Thus as a vanishing relation, we get
\[
\xi_2=\xi_3^{-1}.
\]
Now we consider the movement from $\eta=1$ along the vertical line to $\eta=1+v_0i$ where $(1,v_0)\in \partial A$ and $v_0\approx 0.26$. The generators are deformed as in Figure \ref{moveExample1}.
Thus as a vanishing monodromy relation, we get $(\xi_2^{-1}\xi_1\xi_2)\xi_3=e$.
Thus combining the above relations, we get
\[\xi_1=\xi_3,\, \xi_2=\xi_1,\,\text{and}\,\,\xi_1=e.\]
Namely we conclude that $\pi_1(\mathbb P^2-C)=\{e\}$.
\end{proof}
\begin{figure}[htb]
\setlength{\unitlength}{1bp}
\begin{picture}(600,200)(-130,-620)
{\includegraphics [width=8cm,height=6cm]
{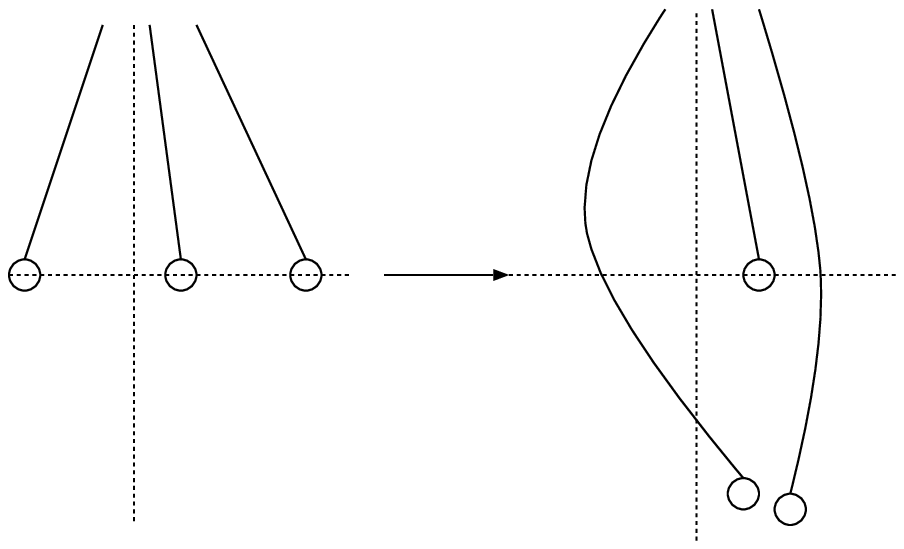}}
\put(-190,-520){$\xi_3$}
\put(-181,-543){$\cdot$}
\put(-220,-520){$\xi_2$}
\put(-212,-543){$\cdot$}
\put(-260,-520){$\xi_1$}
\put(-252,-543){$\cdot$}
\put(-50,-520){$\xi_3$}
\put(-80,-520){$\xi_2$}
\put(-70,-612){$\cdot$}
\put(-68,-543){$\cdot$}
\put(-60,-616){$\cdot$}
\put(-110,-520){$\xi_1$}
\put(-230,-560){$\eta=1$}
\put(-140,-560){$\eta=1+(v_0-\eps)i$}
\end{picture}
\vspace{0cm}
\caption{movement on $\eta=1+si$, Example 1}\label{moveExample1}
\end{figure}
\begin{Remark}\label{remark1}
It can be observed that the set  $\Gamma:=\{t=t_1+t_2i\in \mathbb C\,|\, C_t:\,\text{singular}\}$ 
is a real one dimensional 
semi-algebraic set and  the complement $\mathbb C\setminus\Gamma$ has two connected component in this case.
 The bounded region contain $0$
and for any $t$ in this region, $C_t$
is isotopic  to $C_0$ and it is a  rational sphere. $\Gamma$ is calculated by Groebner basis calculation.
In our case,
 we found that $\Gamma$ is defined by  
\[  {{\it t_1}}^{4}-6\,{{\it t_1}}^{2}+8\,{\it t_1}-3+2\,{{
\it t_2}}^{2}{{\it t_1}}^{2}-6\,{{\it t_2}}^{2}+{{\it t_2}}^{4} =0. 
\]
Certainly $C_{-4}$ is in the outside unbounded region. We may choose another one $C_{\root 3\of 3}$  which must be isotopic to $C_{-4}$ but the branching locus is very different and defined by $R=0$
and its graph is given by Figure \ref{NJ1-bis2}.
\begin{figure}[htb]
\setlength{\unitlength}{1bp}
\includegraphics[width=10cm,height=7cm]{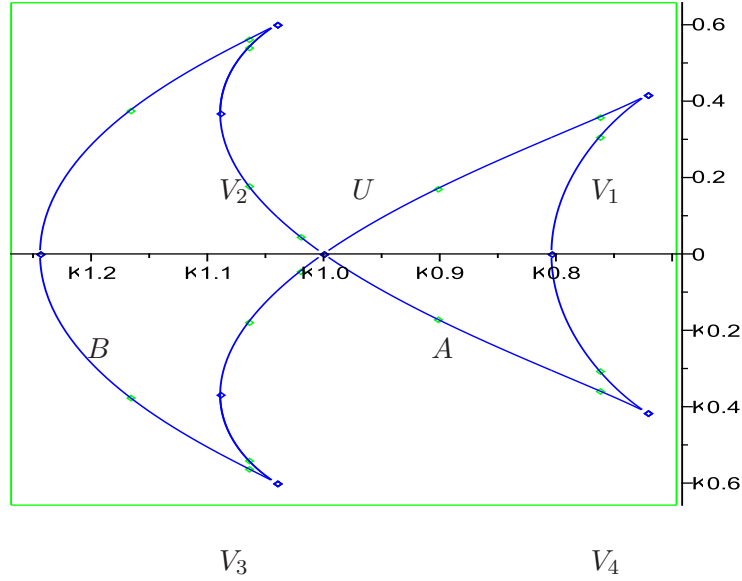}
\put(-120,60){$A$}
\put(-250,60){$B$}
\put(-60,120){$V_1$}
\put(-150,120){$U$}
\put(-200,-20){$V_3$}
\put(-200,120){$V_2$}
\put(-60,-20){$V_4$}
\vspace{1.5cm}
\caption{Graph of $R=0$, Remark1}\label{NJ1-bis2}
\end{figure}
In this example, $\ga(A)=\ga(B)=3$ but the point $(u,v)=(-1,0)$ is special as $L_{-1}\cap C$ has one simple point and one 1-dimensional
component which is defined by $|z|=\root 6\of 3$. Thus the geometry of the pencil is more complicated and it takes more careful consideration to compute the fundamental group.
\begin{multline*}
R(u,v):=27+540\,{v}^{6}{u}^{6}+405\,{v}^{4}{u}^{8}+162\,{u}^{2}{v}^{10}
+120\,{
u}^{9}+162\,{u}^{10}{v}^{2}+654\,{u}^{4}{v}^{2}\\
+120\,{u}^{3}+216\,u{v}
^{2}+1008\,{u}^{5}{v}^{4}+216\,u{v}^{8}+27\,{u}^{12}+405\,{v}^{8}{u}^{
4}+768\,{v}^{6}{u}^{3}\\
+576\,{u}^{7}{v}^{2}+90\,{v}^{6}+558\,{u}^{2}{v}
^{4}+186\,{u}^{6}+27\,{v}^{12}.
\end{multline*}

\end{Remark}
\subsubsection{Example 1-2}
We consider another example with polar degree 1 and radial degree 3.
Let $F(\mathbf z,\bar{\mathbf z}):=z_1^2\bar z_1+z_2^2\bar z_2+z_3^2\bar z_3-4z_2z_3\bar z_3-2 z_3^2\bar z_1$.
Taking the affine chart $\{z_3\ne 0\}$ and coordinates $z=z_1/z_3, w=z_2/z_3$, the affine equation is
$f(z,w)=z^2\bar z+w^2\bar w+1-4w-2\bar z$.
Consider the pencil $L_\eta:=\{w=\eta\},\eta\in \mathbb C$. Putting $w=u+vi$, the branching locus is described by $R=0$
where the explicit form is given in Appendix 2(\S \ref{eq-R})
 to show that  the equation of $R$ grows exponentially by the number of monomials and degree.
However the graph of $R=0$ is not so complicated and it is given in Figure \ref{NJ-bis-singular4}.
We observe that $\ga(W_i)=1,\,i=1,2,3$ and $\ga(T)=3$ where $T$ is the complement of $\overline{W}_1\cup \overline{ W}_2\cup \overline{ W}_3$.
There are two singular points of the boundary of $T$, $V_1,V_2$ and $\ga(V_i)=1$ and the other boundary points have 2 roots.
Let $a_1,\dots,a_6$ be  real roots of $R(u,0)=0$ and we assume that $a_1<a_2<\dots<a_6$. Note that
$a_1\approx -2.22$ and $a_2\approx -1.98$. See figure \ref{NJ-bis-singular4}. In the Figure, the horizontal line is $w$-coordinate.
Take a base line $L_{b_0}$ with $b_0=a_2-\eps$, $0<\eps\ll1$. See the graph of $f=0$ on $\mathbb R^2$ (Figure \ref{NJ-bis-singular4-real}).
Two vertical lines are $w=a_1$ and $w=a_2$. We take generators $\xi_1,\xi_2,\xi_3$ of $\pi_1(L_{b_0}\setminus C)$
as the left side of
 Figure \ref{generatorsExample1}.
 Considering the movement of $\eta=b_0$ to $\eta=a_1$ and from $\eta=b_0$ to $\eta=a_2$, we get the vanishing monodromy relations
 \begin{eqnarray}
 \xi_1\xi_2=e,\quad \xi_2\xi_3=e.
 \end{eqnarray}
 This is also clear from Figure \ref{NJ-bis-singular4-real}.
 Thus $\pi_1(\mathbb P^2\setminus C)$ is abelian and therefore we conclude that $\pi_1(\mathbb P^2\setminus C)=H_1(\mathbb P^2-C)$ is trivial.
 The Euler number is computed as 
 \[
 \chi(C)=\chi(W_1)+\chi(W_2)+\chi(W_3)+\chi(T)-\chi(\partial T)=1+1+1-3-2=-2.
 \]
 Thus the genus of  $C$ is $2$.
 \begin{figure}[htb]
\setlength{\unitlength}{1bp}
\includegraphics[width=10cm,height=7cm]{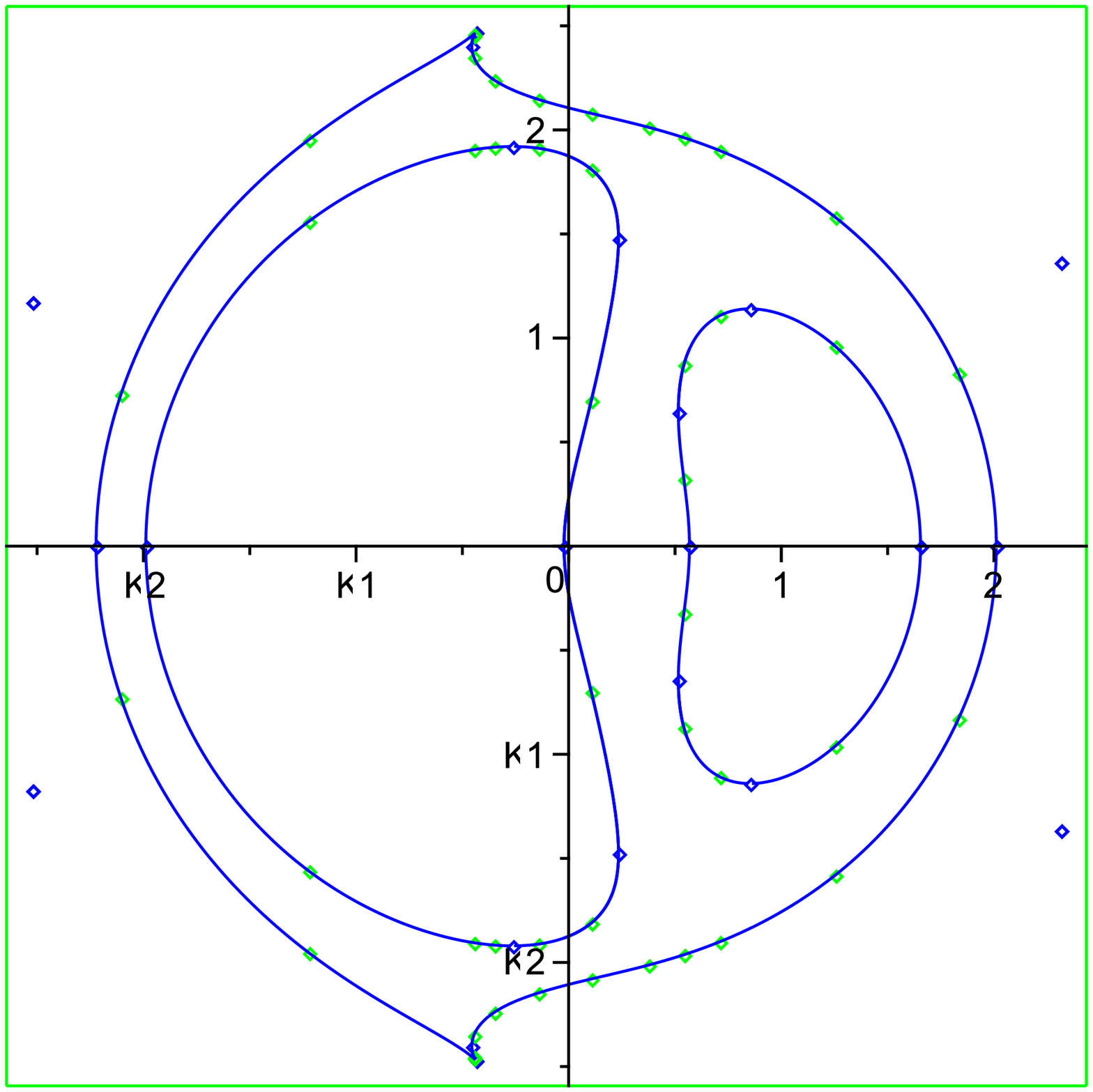}
\put(-280,60){$a_1$}
\put(-252,60){$a_2$}
\put(-160,60){$a_3$}
\put(-128,60){$a_4$}
\put(-70,60){$a_5$}
\put(-50,60){$a_6$}
\put(-45,120){$W_3$}
\put(-165,150){$V_1$}
\put(-165,-40){$V_2$}
\put(-110,80){$W_1$}
\put(-130,80){$T$}
\put(-200,80){$W_2$}

\caption{Graph of $R=0$, Example 1-2}\label{NJ-bis-singular4}
\end{figure}

\begin{figure}[htb]
\setlength{\unitlength}{1bp}
\includegraphics[width=8cm,height=5cm]{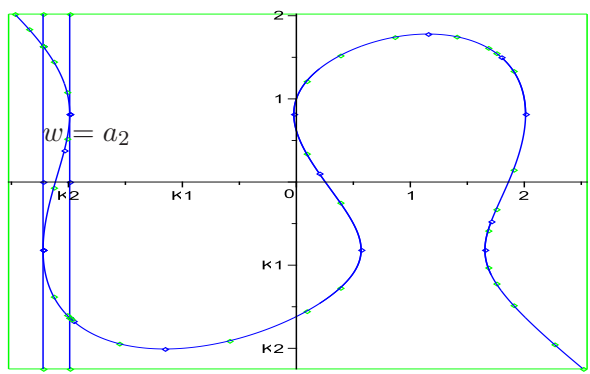}
\put(-240,-20){$w=a_1$}
\put(-210,90){$w=a_2$}

\caption{Graph of $f=0$ with $z,w\in \mathbb R$, Example 1-2 }\label{NJ-bis-singular4-real}
\end{figure}
\subsection{Example 2}
Consider the next mixed curve of polar degree 2 and radial degree 4.
\[
C_t\quad F(\mathbf z,\bar{\mathbf z}) :=\bar z_1z_1^3+z_2^3\bar z_2+z_3^3\bar z_3+t\, z_1^2 \bar z_2 z_3
\]
For $t$ small, $C_t$ is isotopic to the  conic $z_1^2+z_2^2+z_3^2=0$ in $\mathbb P^2$ and a rational sphere (\cite{OkaBrieskorn}).
We take $t=-4$ and put $C=C_{-4}$ and $M=M_{-4}$ the Milnor fiber.
The branching locus is defined by $R=R_1R_2=0$ where
\begin{multline*}
R_1:=1+{u}^{8}+6\,{v}^{4}{u}^{4}+2\,{u}^{4}-2\,{v}^{4}+4\,{v}^{6}{u}^{2}+4
\,{u}^{6}{v}^{2}+{v}^{8},
\\
R_2:=1-12\,{u}^{4}+124\,{v}^{4}-320\,{u}^{2}{v}^{2}+8\,{u}^{14}{v}^{2}+
12580\,{v}^{4}{u}^{4}+12936\,{v}^{6}{u}^{2}\\
+3464\,{u}^{6}{v}^{2}+70\,{
v}^{8}{u}^{8}-1228\,{u}^{8}{v}^{4}+56\,{v}^{10}{u}^{6}-1472\,{v}^{6}{u
}^{6}+56\,{v}^{6}{u}^{10}\\-548\,{v}^{8}{u}^{4}
-26\,{u}^{8}+3846\,{v}^{8
}-12\,{u}^{12}+124\,{v}^{12}+{u}^{16}+28\,{v}^{12}{u}^{4}\\+8\,{v}^{14}{
u}^{2}+28\,{u}^{12}{v}^{4}
-368\,{u}^{10}{v}^{2}+176\,{v}^{10}{u}^{2}+{
v}^{16}.
\end{multline*}
We claim that 
\begin{Proposition}
\begin{enumerate}
\item $\pi_1(\mathbb P^2- C)\cong \mathbb Z/2\mathbb Z$.
\item $\chi(C)=-2$. The genus of $C$ is 2.
\end{enumerate}
\end{Proposition}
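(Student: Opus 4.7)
The plan is to follow the pencil argument of Example~1 closely, exploiting the factorization $R=R_1R_2$ of the branching locus. In the affine chart $\{z_3\ne 0\}$ with $z=z_1/z_3$, $w=z_2/z_3$, the defining equation becomes
\[
f(z,w,\bar z,\bar w)=\bar z z^{3}+w^{3}\bar w+1-4z^{2}\bar w=0,
\]
and we take the pencil $L_\eta=\{w=\eta\}$ with projection $p:C\to \mathbb P^{1}$. Since $q=2$, Theorem~1 gives $H_1(\mathbb P^{2}\setminus C)=\mathbb Z/2\mathbb Z$, so for (1) it suffices to show that $\pi_1(\mathbb P^{2}\setminus C)$ is abelian, after which it will automatically be cyclic of order~2.

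First I would pin down the genuine branching set $\Sigma\subset\Sigma_{\mathbb R}$ by plotting $R_1=0$ and $R_2=0$ separately and, for each candidate component, checking on a small transverse segment whether the cardinality $\gamma(\eta)$ of $L_\eta\cap C$ actually drops. This lets us discard the pieces of $\Sigma_{\mathbb R}$ that come from the complexification $C(\mathbb C)\setminus C$ (the author explicitly warns the inclusion can be strict). The outcome is the $\gamma$-division $\mathcal D$: a value $\gamma(V)$ on every 2-dimensional region and (a priori smaller) values on the edges and vertices.

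Next, fix a generic real base point $b_0$ lying in a 2-dimensional region whose real slice is easy to visualize, close enough to a real edge or vertex that a pair of real roots of $f(z,b_0,\bar z,b_0)=0$ is visibly about to collide. On $L_{b_0}$ take generators $\xi_1,\ldots,\xi_{\gamma_0}$ of $\pi_1(L_{b_0}\setminus C)$ as small counterclockwise loops about the intersection points, ordered in the manner of Figure~\ref{generatorsExample1}. By the same pencil/surjectivity argument already used in Examples 1-1 and 1-2, these $\xi_i$ generate $\pi_1(\mathbb P^{2}\setminus C)$. Moving $b_0$ along short paths to each accessible boundary edge or vertex of its region produces the \emph{vanishing monodromy relations} $\xi_i\xi_{i+1}=e$ (or conjugates thereof) each time two roots collide, together with a ``monodromy at infinity'' relation obtained by circling $w=\infty$. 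Collating these relations should force every commutator $[\xi_i,\xi_j]$ to be trivial, giving (1).

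For (2) I would compute
\[
\chi(C)=\sum_{V\in\mathcal D,\,\dim V=2}\gamma(V)\chi(V)+(\text{lower-dimensional corrections}),
\]
exactly as in the Euler characteristic tally of Example 1-2, to get $\chi(C)=-2$ and hence $g=2$ via $\chi(C)=2-2g$. The main obstacle is really the first step: correctly separating the genuine branching components of $\Sigma_{\mathbb R}$ from the spurious ones contributed by $R_1$ or parts of $R_2$, and determining $\gamma(V)$ on every stratum of $\mathcal D$. A related difficulty is to verify that the collection of vanishing relations produced is actually \emph{sufficient} to kill the commutator subgroup; some relations will be consequences of others, so one may have to move $b_0$ along several non-obvious paths (not just straight real segments) to harvest enough independent relations. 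Once $\mathcal D$ and the $\gamma$-values are pinned down, both the fundamental group computation and the Euler characteristic follow from the same bookkeeping as in Example~1.
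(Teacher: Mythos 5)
Your overall framework is the same as the paper's (pencil $L_\eta=\{w=\eta\}$, $\gamma$-division of the parameter space, vanishing monodromy relations, reduction to abelianness via $H_1=\mathbb Z/2\mathbb Z$), but as written the proposal stops exactly where the real work starts, and the place where it would most likely go wrong is your screening of $\Sigma_{\mathbb R}$ and your assumption about what kinds of relations arise. In the paper's computation the division is: $\gamma(A)=6$, $\gamma(\partial A)=4$ (with four vertices $V_1,\dots,V_4$ where two double points occur), the region $B$ is entirely spurious with $\gamma(B)=2$, and the two \emph{isolated} points $P=(0,1)$, $Q=(0,-1)$ of $R_1=0$ are \emph{genuine} branch points. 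Your test --- ``check on a small transverse segment whether $\gamma$ drops'' --- works for edges but says nothing about isolated points: at $P$ the count $\gamma$ does drop to $1$, but the essential information is that the local monodromy around $P$ is a half-turn interchanging two roots, which yields the relation $\xi_3=\xi_4$. This is not a vanishing relation of the form $\xi_i\xi_{i+1}=e$ at all (the roots swap, they do not annihilate), and note that in Example 1-1 the isolated point $-1$ was spurious while here $P,Q$ are not; so the dichotomy you set up (genuine edges giving $\xi_i\xi_{i+1}=e$ versus discarded spurious components) misses precisely the relation that the paper needs.

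The second gap is your claim that collating the collision relations plus the relation at infinity ``should force every commutator to be trivial.'' It does not. The easy relations are $\xi_1\xi_2=e$ and $\xi_5\xi_6=e$ (from the four real roots, paired by the symmetry $f(z,w)=0\iff f(-z,w)=0$, collapsing at $w=a$ and $w=b$); together with $\xi_6\cdots\xi_1=e$ these leave a group that is far from visibly abelian on six generators. The paper must additionally track the braid deformation of the generators along the vertical segment $\eta=1\to 1+c_0i$ into $\partial A$, which produces the \emph{conjugated} relations $\xi_1\xi_4=e$ and $(\xi_4\xi_5)^{-1}\xi_3(\xi_4\xi_5)\xi_6=e$; only with these, plus $\xi_3=\xi_4$ from $P$, does one get $\xi_2=\xi_1^{-1}$, $\xi_3=\xi_1$, $\xi_4=\xi_1^{-1}$, $\xi_5=\xi_1$, $\xi_1^2=e$, hence $\mathbb Z/2\mathbb Z$ directly (abelianness is not proved first and then combined with $H_1$; the relations already pin down the group). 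You correctly flag both difficulties as ``the main obstacle,'' but resolving them is the proof, so what you have is a reasonable plan rather than an argument. The Euler characteristic tally in part (2) is fine in principle once the $\gamma$-values above are known ($6\cdot\chi(A)+4\cdot(\text{edges})+2\cdot(\text{vertices})+2\cdot\chi(E)+\gamma(P)+\gamma(Q)=-2$), but it too depends on first establishing the stratification you left undetermined.
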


\begin{figure}[htb]
\setlength{\unitlength}{1bp}
\includegraphics[width=10cm,height=7cm]{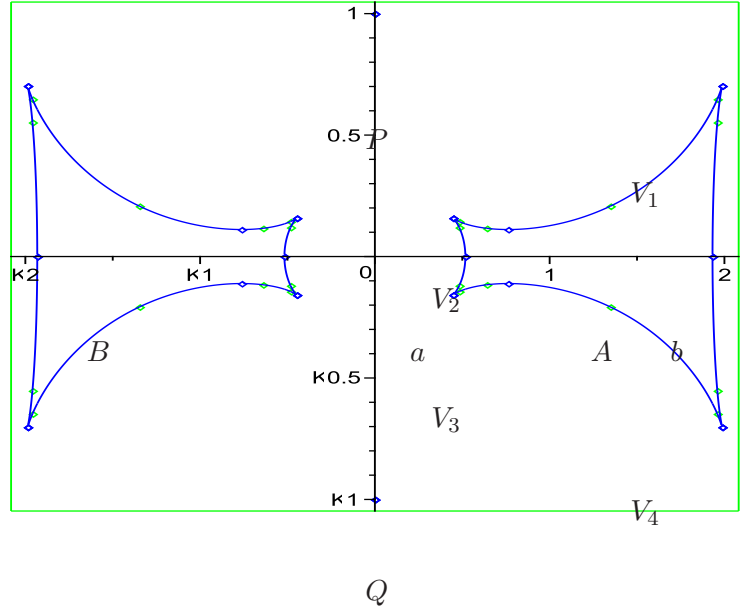}
\put(-60,60){$A$}
\put(-30,60){$b$}
\put(-250,60){$B$}
\put(-45,120){$V_1$}
\put(-145,140){$P$}
\put(-145,-30){$Q$}
\put(-120,80){$V_2$}
\put(-128,60){$a$}
\put(-120,35){$V_3$}
\put(-45,0){$V_4$}
\vspace{1.5cm}
\caption{Graph of $R=0$, Example 2}\label{NJ2-bis2}
\end{figure}
\begin{figure}[htb]
\setlength{\unitlength}{1bp}

\begin{picture}(600,200)(-150,-670)
{\includegraphics [width=8cm,height=6cm]
{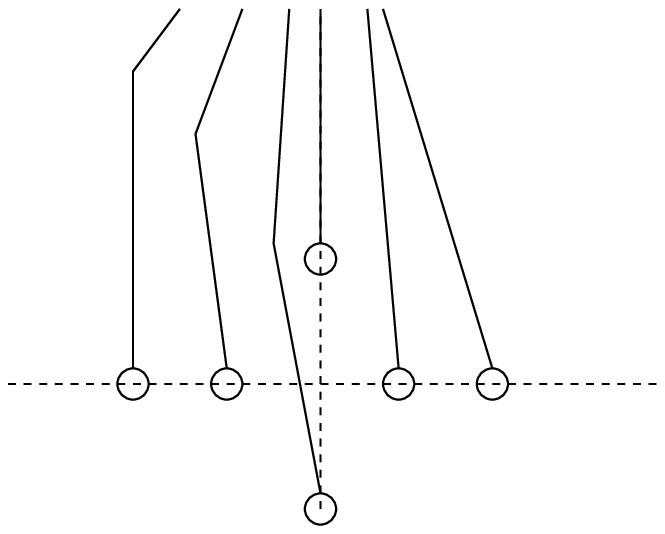}}
\put(-140,-620){$\xi_6$}
\put(-145,-632){$\cdot$}
\put(-175,-600){$\xi_5$}
\put(-178,-632){$\cdot$}
\put(-200,-570){$\xi_4$}
\put(-205,-592){$\cdot$}
\put(-200,-660){$\xi_3$}
\put(-205,-672){$\cdot$}
\put(-235,-600){$\xi_2$}
\put(-237,-632){$\cdot$}
\put(-265,-600){$\xi_1$}
\put(-268,-632){$\cdot$}
\end{picture}
\vspace{0cm}
\caption{Generators of $\pi_1(L_{1}-C\cap L_{1})\, (\eta=1)$}\label{generatorsExample2}
\end{figure}
\begin{proof} The locus $R_1=0$ gives two isolated points $P=(0,1), \,Q=(0,-1)$.
Note that the affine equation of $C$ is defined by $f(z,w)=z^3\bar z+w^3\bar w+1-4 z^2 \bar w=0$. Recall that
the highest degree part of $f$  as a polynomial of $z$ is $z^3\bar z$, and therefore the number of roots counted with sign is two by 
\cite{MixIntersection}.
We also observe that$f(z,w)=0\iff f(-z,w)=0$.  Thus roots are symmetric with respect to the origin in $z$-coordinates.
$R$ is symmetric with respect to $v$-axis but the region $B$ does not give any branching. It comes from the complex part of the curve. Thus $\ga(B)=2$. 
Also we observe that $\ga(A)=6$ and $\ga(\partial A)=4$ except 4 singular points $V_1,\dots, V_4$ where $L_\eta\cap C$ has 2 multiple points.
The complement region
$E:=\mathbb P^1\setminus(\bar A\cup\{P,Q\})$ has 2 simple points for any fiber $L_\eta\cap C$ with $\eta\in E$. $\ga(P)=\ga(Q)=1$.
Take generators of $\pi_1(L_1-C)$, $\xi_i,i=1,\dots, 6$ as in Figure \ref{generatorsExample2}. Observe that 
$f(z,w)=0$ implies $f(-z,w)=0$. Thus the roots are always paired by $z, -z$ for a fixed $w$.
Put $\bar A\cap \{v=0\}=\{a,b\}$ with
$a\approx 0.51$ and $b\approx 1.93$.
First we consider the movement $\eta=1\to a$. 
Consider the graph of $f_r:=z^4+w^4+1-4z^2w$ (Figure\ref{NJ2-bis2-real}) where $f_r$ is the restriction of $f$ to $\mathbb R^2$. 
This says that on $[a,b]$, $L_\eta\cap C$ has exactly four real roots, which are symmetric with respect the origin and at $\eta=a$, they collapse to two double roots.
Put  $f_i$  be  the restriction of $f(iz,w)$ to $(z,w)\in \mathbb R^2$
and its graph.  See Figure \ref{NJ2-bis2-imaginary}.
Using the real graph of $f_i:=-z^4+w^4+1+4z^2w$, we see also that there are exactly two purely imaginary root of $f(z,\eta)=0$ for any $w\in \mathbb R$.
The above observation says that
\begin{eqnarray}\label{rel1}
\xi_1\xi_2=e,\quad \xi_5\xi_6=e.
\end{eqnarray}
(The Figure \ref{NJ2-bis2-real} shows that we get the same degeneration for $\eta\to b$.)
Then we  consider  movement of  the line $L_\eta$ further to the left until $\eta=0$. Then we move $L_\eta$ along the imaginary axis to $\eta=i$ which is a root of multiplicity 2 (P in the graph). Note that the monodoromy along $|w-i|=\eps$ is topologically half turn of two roots.
Thus we get 
\begin{eqnarray}\label{rel2}
\xi_3=\xi_4.
\end{eqnarray}
\begin{figure}[htb]
\setlength{\unitlength}{1bp}
\begin{picture}(300,200)(-50,-20)
{\includegraphics[width=8cm,height=5cm]{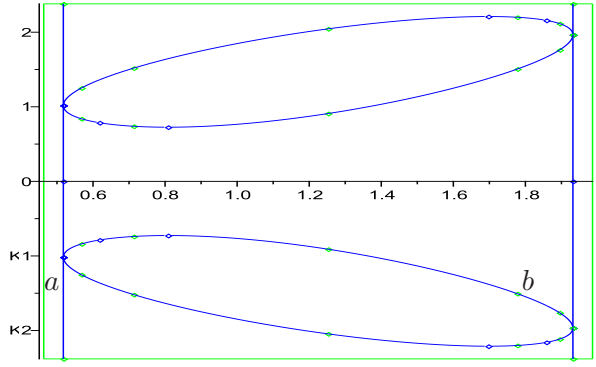}}
\put(-210,30){$a$}
\put(-30,30){$b$}
\end{picture}
\vspace{1.5cm}
\caption{Graph of $f=0$, Example 2}\label{NJ2-bis2-real}
\end{figure}
\begin{figure}[htb]
\setlength{\unitlength}{1bp}
\begin{picture}(300,200)(-50,-20)
{\includegraphics[width=8cm,height=5cm]{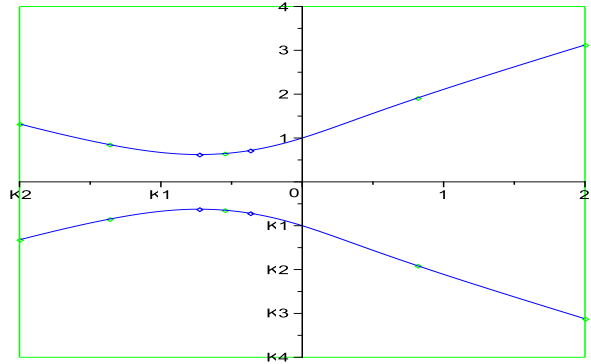}}
\end{picture}
\vspace{1.5cm}
\caption{Graph of $f_i(z,w)=0$, Example 2}\label{NJ2-bis2-imaginary}
\end{figure}

Now we will see the vanishing relation along the vertical line for $\eta=1\to 1+c_0i\dots$ where $c_0$ is the positive root of $R_2(1,v)=0$. The root of 
$f(z,w)=0$ with $w=1+ci$ is given as
\[\pm P_1,\pm P_2,\quad P_1\approx 1.57-1.21i,P_2\approx 0.76+0i
\] where $\pm P_1$ are double roots. Recall that 
$f(z,1)=0$ has roots
\[\pm  Q_1, \pm Q_2,\pm Q_3,\quad
Q_1\approx 2.11i,\,Q_2\approx 0.76, Q_3\approx 1.84.
\]

The movement of generators during the above movement are described in Figure \ref{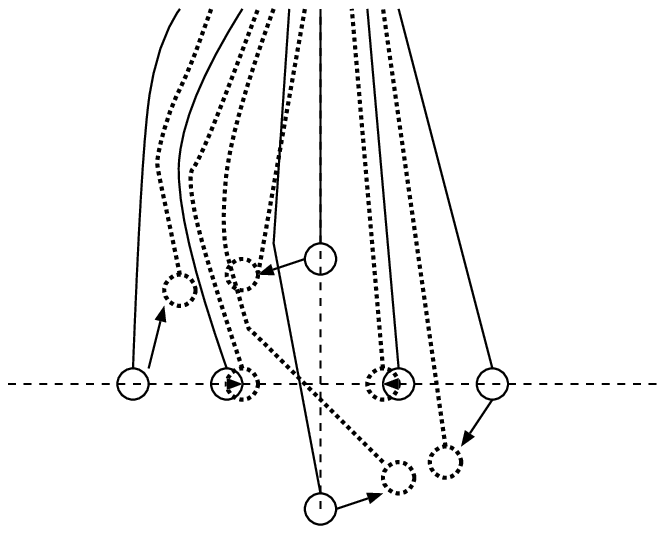}.
The doted loops show the situation in $\eta=1+(c_0-\eps)i$ wit $0<\eps\ll 1$. They are denoted as $\xi_1',\dots,\xi_6'$.
In this movement, $\xi_2,\xi_5$ does not move much. Other generators are deformed  as indicated with arrows.
At $\eta=1+c_0i$, $\xi_1',\xi_4'$ and $\xi_3',\xi_6'$ collapse respectively.
Thus  $\xi_1=\xi_1',\xi_4=\xi_4'$  and $\xi_3=\xi_3',\xi_6=\xi_6'$ and we get vanishing relations which is written as
\begin{eqnarray}\label{rel3}
\xi_1\xi_4=e,\quad (\xi_4\xi_5)^{-1}\xi_3(\xi_4\xi_5) \xi_6=e.
\end{eqnarray}
Using (\ref{rel1}), (\ref{rel2})and (\ref{rel3}), we conclude that
\[
\xi_2=\xi_1^{-1},\,\xi_3=\xi_1,\,\xi_4=\xi_1^{-1},\xi_5=\xi_1,\,\xi_1^2=e.\]
That is, $\pi_1(\mathbb P^2-C)\cong\mathbb Z/2\mathbb Z\cong H_1(\mathbb P^2-C)$.
\end{proof}
\begin{figure}[htb]
\setlength{\unitlength}{1bp}

\begin{picture}(600,200)(-150,-670)
{\includegraphics [width=8cm,height=6cm]
{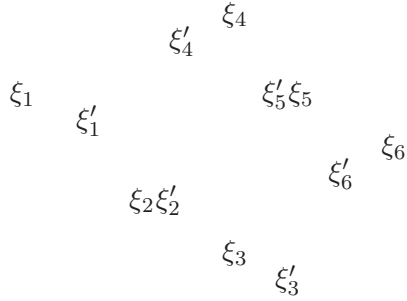}}
\put(-140,-620){$\xi_6$}
\put(-160,-630){{$\xi_6'$}}
\put(-175,-600){$\xi_5$}
\put(-185,-600){$\xi_5'$}
\put(-200,-570){$\xi_4$}
\put(-220,-580){$\xi_4'$}
\put(-200,-660){$\xi_3$}
\put(-180,-670){$\xi_3'$}
\put(-235,-640){$\xi_2$}
\put(-225,-640){$\xi_2'$}
\put(-280,-600){$\xi_1$}
\put(-255,-610){$\xi_1'$}
\end{picture}
\vspace{0cm}
\caption{Movement of generators, Example 2}\label{Move-eps.eps}
\end{figure}
\newpage
\subsection{ Surjectivity}\label{surjective} Assume that $f(z,\bar z,w,\bar w)=0$ is the affine equation of a non-singular  mixed curve $C$ of polar degree $q$ and radial degree $q+2r$. We assume that 
$f$ is monic in the sense that it has the monomial $z^{q+r}{\bar z}^r$ with a non-zero coefficient.
Consider the pencil line $L_\eta=\{w=\eta\},\eta\in \mathbb C$
and we consider the $\gamma$-division $\mathcal D$ of $\mathbb C$ (the parameter space) by the value of $\gamma(\eta)$ using  the graph of $R$. We assume that every region, edges and vertices are regular.
We assume also that the base point $b$ of the pencil is not on $C$.
Let $G=\{\gamma(\eta)\,|\, \eta\in \mathbb C\}\subset\mathbb N$
the possible number of roots of $f(z,\bar z,\eta,\bar \eta)=0$ and $\gamma_{max}$ be the maximum of $G$. We assume the following two conditions.
\newline
(1) The set $U_{max}:=\{\eta\,|\, \gamma(\eta)=\gamma_{max}\}$ is connected and it is  a region. 
\newline
(2) Take  a region $U$   of $\mathcal D$ with $\gamma(U)<\gamma_{max}$. 
Put
$\partial_{+}U=\{q\in \partial U\,|\, \gamma(q)\ge \gamma(U)\}$. Then $\partial_+ U$ is connected.

Note that the above condition is satisfied in Example 1-1, Example 1-2, Example 2.
Let $B$ be the complement of the union of regions of $\mathcal D$, i.e. $B$ is the union of the edges and vertices.
We fix a generic line $L_{\eta_0}$ with $\eta_0\in U_{max}$ and a base point $b\in L_{\eta_0}\setminus C$.
Let $\sigma:(I,\{0,1\})\to (\mathbb P^2\setminus C,b)$ be a loop. We may assume that $\{t\,|\, \sigma(t)\in B\}$ is finite.
Let $\alpha:=\min\{\gamma(\si(t))\,|\,t\in I\}$ and we may assume that $\alpha$ is taken in a region $V$ of $\mathcal D$.
Put $\mathcal D_{\beta}$ be the union of $\overline U$ with $\gamma(U)\ge \beta$. The we assert:
\begin{Assertion}
$\sigma$ is homotopic in $\mathbb P^2\setminus C$ to  a loop $\hat\sigma$ in the pencil line $L_{\eta_0}\setminus C$.
 \end{Assertion}
\begin{proof}
We may assume that $\pi\circ \sigma$ intersect transversely with the smooth points of $B$ if it intersects.

Step 1. Suppose that $\alpha\ne \ga_{max}$. Then the image of $\pi\circ \sigma$ intersects with more than two regions.
Take a path segment $L$ of $\pi(\sigma(I))\cap V$.
Let $P,Q$ be the end points of $L$ and assume that $P=\pi(\sigma(t_1))$ and 
$Q=\pi(\sigma(t_2))$ with $t_1<t_2$. By the assumption (2), $P,Q$ belongs to the  unique boundary component
$\partial_{+} V$ and there is a path $L''$ in the boundary $\partial_{+} V$ connecting $P,Q$ and 
$\gamma(\eta)\ge\alpha$ for any $\eta\in L''$. We want replace $L$ by some path $L'\subset V$ which is homotopic to $L''$ relative the end points.
See Figure \ref{deformation1}. Consider closed path at $Q$, $\omega:=L^{-1}\cdot L'$. 
The composition of paths is to be read from the left. Take a lift  $\tilde \omega$
which is a loop starting at $\sigma(t_2)$, passing through $\sigma(t_1)$ and comes back to $\si(t_2)$
 which is null homotopic in $\mathbb P^2\setminus C$.
We can simply take $\tilde \omega$ near the infinity.
Then replace $\sigma$ by $\sigma_{[0,t_2]}\cdot \tilde \omega\cdot \sigma_{[t_2,1]}$ which is homotopic to $\sigma$. Now
$\sigma$ is clearly homotopic to $\sigma'$ where
$\sigma':=\sigma_{[0,t_2]}\cdot\tilde\omega\cdot \sigma_{[t_2,1]}$.
 Note that  the image  $\pi(\sigma'(I))$   replace the segment $L$ by $L'$.
 Now we can deform $L'$ to $L''$ and further to the other side of the region of $L''$. Doing this operation for any path segment cut by $V$, we
  get a loop $\sigma''$ whose image by $\pi$ is in $\mathcal D_\beta$ where $\beta:=\min\{G\setminus\{\alpha\}\}$. By the inductive argument, we can deform $\sigma$ keeping the homotopy class to a  loop $ \sigma_1$ in $\pi^{-1}(U_{max})$.

\begin{figure}[htb]
\setlength{\unitlength}{1bp}

\begin{picture}(600,200)(-150,-630)        
{\includegraphics [width=6cm,height=4cm]
{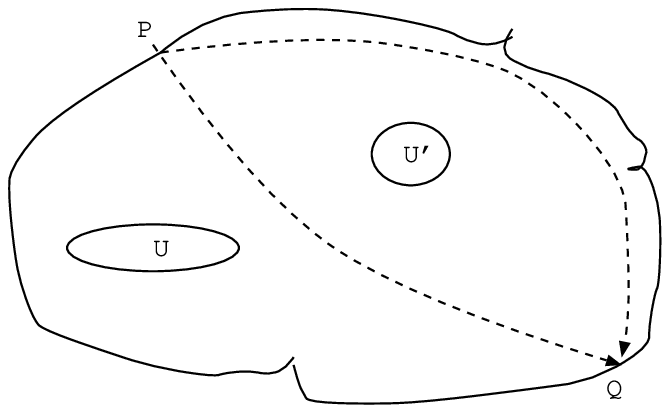}}
\put(-130,-470){$L''$}
\put(-130,-490){$L'$}
\put(-130,-550){$L$}
\put(-180,-530){$V$}
\end{picture}
\vspace{-2cm}
\caption{Segment $L$}\label{deformation1}
\end{figure}
Step 2. Now we assume that $\sigma_1$ is a loop $\pi\inv(U_{max})$. We deform $\sigma_1$  further
 to a loop $\hat \sigma$ which is 
a loop in the line $L_{\eta_0}$.

If $U_{max}$ is contractible, this is easy to deform using the fibration structure
of $\pi$ over $U_{max}$. This is the case for Example 1-1 and Example 2.
In Example 1-2, $U_{max}=T$ and $\pi_1(U_{max},\eta_0)$ is a free group of rank 2.

Assume that $\pi_1(U_{max})$ is non-trivial.
Put $\tau:=\pi\circ \sigma_1$ , a loop in $U_{max}$. Take a lift $\tilde \tau$  starting at $b$ which is  a contractible closed curve  in $\pi^{-1}U_{max}\setminus C$.
Consider the loop $\sigma_1\cdot {\tilde \tau}^{-1}$.
This is homotopic to $\sigma_1$. The image of this modified loop by $\pi$ is clearly homotopic to a constant loop at
 $\eta_0$.
Using the fibration structure over $U_{max}$, we can deform this loop to a loop $\hat{\sigma}$ in $L_{\eta_0}\setminus C$.
For the detail of lifting argument, see  for example Spanier \cite{Spanier}.
\end{proof}

The assertion is not true if $\eta_0$ does not belong to $U_{max}$. Also a loop $\tau\in (L_{\eta_0}\setminus C)$ can not be expressed
by a loop in $(L_\eta\setminus C)$ if $\gamma(\eta)<\gamma_{max}$ without using the monodromy relations. An example is given by $\xi_{2i-1},\xi_{2i}$ in Figure 2 can not deformed on the line $L_{\sigma(1/2+\eps)}$. We close this paper by  a question.
\newline\noindent
{Question.}
Does the conditions (1) and (2) hold for any mixed function?
\subsubsection{Appendix 1}\label{Appendix1}
Let $f$ be a mixed strongly  homogeneous polynomial. To compute the defining polynomial of the branching locus $R$ in Example 1-1, Example 1-2 and Example 2,
we proceed as follows.
Let $z=x+yi$ and $w=u+vi$ and write $f$ as $g+ih$ where $g,h$ are polynomials of $x,y,u,v$ with real coefficients.
Let $J=\frac{\partial g}{\partial x}\frac{\partial h}{\partial y}-\frac{\partial g}{\partial y}\frac{\partial h}{\partial x}$
and let $A=[g,h,J]$, the ideal generated by $g,h,J$.
Then we use the maple command:
{ Groebner[Basis](A,plex(x,y,u,v))}.
 For further explanation for Groebner calculation, we refer \cite{Cox} for example.

\vspace{.3cm}\noindent
{\bf Acknowledgement.} For the numerical calculation of roots of $f(z,w)=0$ with fixed various complex numbers $w$'s, we have used the following program on maple which is kindly 
written by Pho Duc Tai, Hanoi University of Science. I am grateful to him for his help.

\vspace{.3cm}
{\bf  Pho's program} to compute roots of mixed polynomial on Maple:\newline
fsol3 := proc (f, z)\nl
local aa, a, b, ff, f1, f2, h, i, j, k, s, temp;
print(Factorization\_of\_Input = factor(f)); ff := factors(f)[2]; temp := \{\};\nl
for k to nops(ff) do\nl
if 1 $<$ ff[k][2] then RETURN(printf("Input is not squarefree. Please solve each factor.")) end if;\nl
assume(a, real); assume(b, real); h := expand(subs(z = a+I*b, ff[k][1]));\nl
f1 := Re(h); f2 := Im(h); aa := RootFinding[Isolate]([f1, f2], [a, b]);\nl
temp := \`{}union\`{}(temp, {seq([[op(aa[i][1])][2], [op(aa[i][2])][2]], i = 1 .. nops(aa))}) end do; RETURN([op(temp)]) end proc

\subsubsection{Appendix2: Equation of R for Example 1.2}\label{eq-R}
The equation of the branching locus is the following.
\begin{multline*}
R(u,v)=-179685+129384576\,{u}^{4}{v}^{4}+2160\,{u}^{19}{v}^{2}+27\,{u}^{24}-
864\,{v}^{22}\\
+13590816\,{u}^{7}-102858240\,{v}^{6}{u}^{5}-47520\,{u}^{18}{v}^{4}-7631712\,u+174564288\,{v}^{6}{u}^{4}\\
-288581376\,{v}^{6}{u}^{2}+193050720\,{v}^{8}{u}^{2}+580608\,{v}^{14}{u}^{3}+2032128\,{u}^{13}{v}^{4}\\
+5080320\,{u}^{9}{v}^{8}+4064256\,{u}^{7}{v}^{10}+72576\,u{v}^{16}-142560\,{u}^{6}{v}^{16}\\
-9504\,{u}^{2}{v}^{20}-142560\,{u}^{16}{v}^{6}-399168\,{u}^{12}{v}^{10}-285120\,{u}^{14}{v}^{8}\\
-285120\,{u}^{8}{v}^{14}-47520\,{u}^{4}{v}^{18}-399168\,{u}^{10}{v}^{12}+441460992\,{v}^{2}{u}^{6}\\
+27\,{v}^{24}+542688\,{v}^{16}+55344648\,{v}^{6}{u}^{6}+216\,{u}^{21}+12096\,{v}^{20}-6048\,{u}^{19}\\
+20877120\,{u}^{2}-466968\,{u}^{15}+4064256\,{u}^{11}{v}^{6}+2032128\,{u}^{5}{v}^{12}+580608\,{u}^{15}{v}^{2}\\
+1550016\,{v}^{2}-21912872\,{u}^{3}-103122216\,{v}^{2}u+15611136\,u{v}^{10}-2415360\,{v}^{8}{u}^{3}\\
-212093856\,{u}^{3}{v}^{4}+165770304\,{u}^{7}{v}^{2}-93452\,{u}^{18}+33480480\,{v}^{8}{u}^{5}+
27856256\,{v}^{6}{u}^{3}\\
+138815904\,{v}^{6}u-137971200\,{u}^{7}{v}^{4}-46557076\,{v}^{6}+134691072\,{u}^{2}{v}^{2}+20322912\,{u}^{4}\\
+35835552\,{v}^{4}-15874316\,{u}^{6}+588672\,{u}^{5}-425273772\,{v}^{2}{u}^{4}+404256\,{u}^{16}\\
+352376064\,{v}^{2}{u}^{3}+44453280\,{v}^{4}{u}^{9}+57198720\,{v}^{6}{u}^{7}+6479040\,{v}^{10}{u}^{3}\\
-596640\,{v}^{12}u+6378146\,{v}^{12}+271049040\,{u}^{5}{v}^{4}-55084800\,{u}^{9}{v}^{2}-242721696\,{u}^{8}{v}^{2}\\
-283722816\,{u}^{6}{v}^{4}+85432284\,{u}^{10}{v}^{2}+15257280\,{u}^{11}{v}^{2}-9504\,{u}^{20}{v}^{2}\\
-73189752\,{v}^{8}u+324\,{u}^{22}{v}^{2}+1597920\,{u}^{13}+2630208\,{v}^{14}{u}^{2}-3738096\,{u}^{14}{v}^{4}\\
-12425640\,{u}^{10}{v}^{8}-11851032\,{u}^{8}{v}^{10}-8601296\,{u}^{12}{v}^{6}-7583152\,{u}^{6}{v}^{12}\\
-912372\,{u}^{16}{v}^{2}-752448\,{u}^{14}+190612542\,{u}^{8}{v}^{4}-52716324\,{u}^{2}{v}^{10}\\
-107902338\,{v}^{8}{u}^{4}+1782\,{u}^{20}{v}^{4}-50865792\,{v}^{4}u-4232728\,{u}^{9}+9720\,{u}^{17}{v}^{4}\\
+2160\,{u}^{3}{v}^{18}+9720\,{u}^{5}{v}^{16}+25920\,{u}^{15}{v}^{6}+54432\,{u}^{11}{v}^{10}+45360\,{u}^{9}{v}^{12}\\
+45360\,{u}^{13}{v}^{8}+25920\,{u}^{7}{v}^{14}+216\,u{v}^{20}+1451520\,{u}^{6}{v}^{14}+120960\,{u}^{2}{v}^{18}\\
+2155584\,{v}^{12}{u}^{2}-821676\,{u}^{2}{v}^{16}-18081480\,{u}^{7}{v}^{8}-11114040\,{u}^{11}{v}^{4}\\
-18630120\,{u}^{9}{v}^{6}-10126488\,{u}^{5}{v}^{10}-3003624\,{u}^{3}{v}^{12}-3552264\,{u}^{13}{v}^{2}\\
-3198096\,{u}^{4}{v}^{14}+72576\,{u}^{17}-337318944\,{u}^{5}{v}^{2}-99220\,{v}^{18}-2136768\,{v}^{14}\\
+544320\,{u}^{16}{v}^{4}+2540160\,{u}^{12}{v}^{8}+3048192\,{u}^{10}{v}^{10}+1451520\,{u}^{14}{v}^{6}\\
+2540160\,{u}^{8}{v}^{12}+120960\,{u}^{18}{v}^{2}+544320\,{u}^{4}{v}^{16}+12096\,{u}^{20}+80914380\,{v}^{4}{u}^{2}\\
+7271904\,{u}^{10}-864\,{u}^{22}+10619712\,{u}^{4}{v}^{10}+18579648\,{u}^{12}{v}^{4}-22564800\,{u}^{6}{v}^{8}\\+
33143040\,{u}^{8}{v}^{8}+19619136\,{u}^{6}{v}^{10}-78557760\,{u}^{8}{v}^{6}-71293248\,{u}^{10}{v}^{4}\\
+33409728\,{u}^{10}{v}^{6}+7934784\,{u}^{4}{v}^{12}+4945344\,{u}^{14}{v}^{2}+5940\,{u}^{18}{v}^{6}\\
+5940\,{u}^{6}{v}^{18}+324\,{u}^{2}{v}^{22}+13365\,{u}^{16}{v}^{8}+24948\,{u}^{12}{v}^{12}+21384\,{u}^{14}{v}^{10}\\
+13365\,{u}^{8}{v}^{16}+1782\,{u}^{4}{v}^{20}+21384\,{u}^{10}{v}^{14}-217728\,{u}^{15}{v}^{4}-762048\,{u}^{11}{v}^{8}\\
-762048\,{u}^{9}{v}^{10}-508032\,{u}^{13}{v}^{6}-508032\,{u}^{7}{v}^{12}-54432\,{u}^{17}{v}^{2}-54432\,{u}^{3}{v}^{16}\\
-217728\,{u}^{5}{v}^{14}-6048\,u{v}^{18}-7068480\,{u}^{8}-357240\,u{v}^{14}+30563520\,{v}^{8}\\-15242784\,{v}^{10}-1027742\,{u}^{12}-1945344\,{u}^{11}-22380096\,{u}^{12}{v}^{2}
\end{multline*}

\def\cprime{$'$} \def\cprime{$'$} \def\cprime{$'$} \def\cprime{$'$}
  \def\cprime{$'$} \def\cprime{$'$} \def\cprime{$'$} \def\cprime{$'$}


\begin{thebibliography}{1}
\bibitem{Bleher}
 P. Bleher, Y. Homma, L. Ji,
              P. Roeder.
    {\em Counting zeros of harmonic rational functions and its
              application to gravitational lensing},
   {Int. Math. Res. Not. IMRN},
    {8},
   {2245--2264}.
 \bibitem{Cox}
     {D.A.~Cox, J.~Little and D.~O'Shea}.
    {\em Ideals, varieties, and algorithms}.
     {UTM},
     {\em An introduction to computational algebraic geometry and
              commutative algebra}.
 {Springer, Cham},
     {2015}.
     \bibitem{Hamm-Le}
H. A. Hamm and  D.T. L\^e.
\newblock{\em  Un th\'eor\`eme de {Z}ariski du type de {L}efschetz}.
\newblock {Ann. Sci. \'Ecole Norm. Sup. (4)}, 6:317-355, 1973.
\bibitem{Molina}
J.~L. Cisneros-Molina.
\newblock {\em Join theorem for polar weighted homogeneous singularities.}
\newblock In {Singularities {II}}, volume 475 of {\em Contemp. Math.},
  pages 43--59. Amer. Math. Soc., Providence, RI, 2008.
\bibitem{Inaba}
{K. Inaba, M. Kawashima and M. Oka.}
{\em Topology of mixed hypersurfaces of cyclic type.}
to appear in {J. Math. Soc. Japan.}


\bibitem{Milnor}
J.~Milnor.
\newblock {\em Singular points of complex hypersurfaces}.
\newblock Annals of Mathematics Studies, No. 61. Princeton University Press,
  Princeton, N.J., 1968.
\bibitem{OkaSurvey}
 M. Oka.
 {\em A survey on {A}lexander polynomials of plane curves},
 in  {Singularit\'es {F}ranco-{J}aponaises},
   {S\'emin. Congr.},
    {10},
     {209--232},
 {Soc. Math. France, Paris},
   {2005}.
  
\bibitem{OkaPolar}
M.~Oka.
\newblock{\em Topology of polar weighted homogeneous hypersurfaces.}
\newblock { Kodai Math. J.}, 31(2):163--182, 2008.
\bibitem{OkaJarc}
  {M. Oka.}
    {\em On mixed plane curves of polar degree 1}.
  { The {J}apanese-{A}ustralian {W}orkshop on {R}eal and {C}omplex
              {S}ingularities---{JARCS} {III}},
    {Proc. Centre Math. Appl. Austral. Nat. Univ.},
    {43},
    {67--74},
     {2010}.
\bibitem{OkaBrieskorn}
 {M. Oka}.
{\em On mixed {B}rieskorn variety}.
  { Topology of algebraic varieties and singularities},
{Contemp. Math.},
    {538},
     {389--399},
  {Amer. Math. Soc., Providence, RI},
       {2011}.
\bibitem{MixIntersection}M.~Oka.
{\em Intersection theory on mixed curves.}
{Kodai J. Math.}
35 (2012), no. 2, 248-267.

 \bibitem {OkaLens}
M.~Oka.
\newblock {\em On the root of an extended Lens equation and an application.}
Math. arXiv 1505.03576v2,to appear in Singularities and Foliations. Geometry, Topology and Applications, 
	Salvador, Brazil,2015, Springer Proceedings in Mathematics \& Statistics


\bibitem{MC}
M.~Oka.
{\em On mixed projective curves}.
{Singularities in geometry and topology},
    {IRMA Lect. Math. Theor. Phys.},
    {20},
    {133--147},
  {Eur. Math. Soc., Z\"urich},
      {2012}.
  
\bibitem{OkaMix}
M.~Oka.
\newblock {\em Non-degenerate mixed functions.}
\newblock { Kodai Math. J.}, 33(1):1--62, 2010.


\bibitem{Rhie}
S.H.~Rhie.
\newblock {\em n-point Gravitational Lenses with 5(n-1) Images.}
\newblock { arXiv:astro-ph/0305166,} May 2003.
\bibitem{Spanier}
E.H. Spanier.
\newblock {\em Algebraic topology},
MacGraw Hill,1966.
\bibitem{Za1}
O. Zariski.
\newblock {\em On the problem of existence of algebraic functions of two variables
  possessing a given branch curve.}
\newblock {Amer. J. Math.}, 51:305-328, 1929.


\end{thebibliography}
\end{document}